\documentclass[11pt]{amsart}

\usepackage[a4paper,margin=1.05in]{geometry}
\usepackage{amsmath,amssymb,amsthm,mathtools}
\usepackage[expansion=false]{microtype}
\usepackage[hidelinks]{hyperref}
\usepackage[nameinlink,capitalize]{cleveref}
\usepackage{color}

\numberwithin{equation}{section}
\allowdisplaybreaks

\newtheorem{theorem}{Theorem}[section]

\newtheorem{lemma}[theorem]{Lemma}

\theoremstyle{definition}
  \newtheorem{definition}{Definition}[section]

\theoremstyle{remark}
\newtheorem{remark}[theorem]{Remark}

\newcommand{\R}{\mathbb R}

\newcommand{\diam}{\operatorname{diam}}
\newcommand{\dist}{\operatorname{dist}}

\newcommand{\tr}{\operatorname{tr}}

\newcommand{\beq}{\begin{equation}}
\newcommand{\eeq}{\end{equation}}

\begin{document}

\title[Two-point estimates]
{A new approach to interior estimates for fully nonlinear elliptic equations}
\author{Jui-Yun Hung, Jiakun Liu and Neil  S.Trudinger}
 
\address{Jui-Yun Hung,
Mathematical Sciences Institute, The Australian National University, Canberra, ACT 0200, Australia}
\email{Jui-Yun.Hung@anu.edu.au}
 
\address{Jiakun Liu,
School of Mathematics and Statistics, The University of Sydney, Camperdown, NSW 2006, Australia}
\email{jiakun.liu@sydney.edu.au} 

\address{Neil S. Trudinger, 
Mathematical Sciences Institute, The Australian National University, Canberra, ACT 0200, Australia}
\email{neil.trudinger@anu.edu.au}

\thanks{This research was supported by the Australian Research Council through grants FT220100368 and DP230100499.}

\subjclass[2020]{35J96, 35J25, 35B65.}

\keywords{Monge-Amp\`ere equation, Hessian equation, interior estimate}

\begin{abstract}
We give new proofs of interior second derivative estimates for the Monge-Amp\`ere equation and the constant quadratic Hessian equation, based on the two-point maximum principle. 
The arguments require only one differentiation of the equation. 
For the Monge-Amp\`ere equation, the inhomogeneous term  is assumed to be positive and continuously differentiable. 
For the quadratic Hessian  equation, the estimate holds in every dimension $n\ge2$.
\end{abstract}

\date{20 October 2026}

\maketitle

 \baselineskip15pt
 \parskip4pt

\section{Introduction}\label{intro}

The two-point maximum principle has its origins in the study of parabolic equations in the 1960s. 
In the 1990s, the method was further developed for applications to geometric flows by Hamilton \cite{Ha92-1, Ha92-2} and Huisken \cite{Hu98}, as well as in the theory of viscosity solutions \cite{CIL92}.
More recent successful applications include Andrews and Clutterbuck's proof of the fundamental gap conjecture \cite{AC11}, Brendle's proof of the Lawson conjecture \cite{Br13}, and Andrews and Li's proof of the Pinkall-Sterling conjecture \cite{AL15}. 
For further background and related developments, we refer the reader to the excellent surveys \cite{An15,Br14}. 

Consider the Monge-Amp\`ere equation
	\begin{equation}\label{MAeq}
		\det D^2u  = f \quad\text{ in } \Omega\subset\mathbb{R}^n,
	\end{equation}
where $\Omega$ is a bounded domain and $f$ is a given positive function. 
Pogorelov \cite{Po78} established an interior $C^2$ estimate for convex solutions with affine boundary data, now known as Pogorelov's estimate, see \cite{GT01,LT10}. 
Specifically, for any $\Omega'\Subset\Omega$,
	\begin{equation}\label{Pogo}
		\sup_{\Omega'} |D^2u| \leq \frac{C}{d_{\Omega'}},
	\end{equation}
where $d_{\Omega'}=\dist(\Omega',\partial\Omega)$ and $C$ depends on $n, \|\log f\|_{C^{1,1}(\Omega)}, \|u\|_{C^1(\Omega)}$, and $\diam\Omega$. 
In dimension two, a genuine interior $C^2$ estimate was first established by Heinz \cite{He59} and recently obtained by different methods in \cite{CHO16,Liu21}.
For the constant right-hand side $f=1$, Yuan \cite{Yu23} gave a new proof of Pogorelov's estimate using a mean value inequality approach, rooted in Trudinger's classic proof of the gradient estimates for the minimal hypersurface equation \cite{GT01}.

{Next}, consider the quadratic Hessian equation, generally known as the $\sigma_2$ equation, 
	\begin{equation}\label{sig2eq}
		\sigma_2\big(\lambda(D^2u)\big) = \sum_{1\leq i < j \leq n} \lambda_i\lambda_j = 1,
	\end{equation}
where $\lambda_1,\cdots,\lambda_n$ are the eigenvalues of $D^2u$. 
When $n=2$, \eqref{sig2eq} is the Monge-Amp\`ere equation \eqref{MAeq} with $f=1$. 
In dimension three, the interior $C^{1,1}$ estimate was established by Warren and Yuan \cite{WY09}.
Shankar and Yuan \cite{SY25} obtained the estimate in dimension four and, in higher dimensions, under the additional condition
	\begin{equation*}
		\lambda_{\min}(D^2u) \geq -c(n)\Delta u. 
	\end{equation*}
Just recently, Li and Wu \cite{LWarX} discovered the interior $C^{1,1}$ estimate in arbitrary dimension $n\geq 2$. 
Specifically, they proved that for any $u\in C^\infty(B_2)$ satisfies \eqref{sig2eq} in $B_2\subset\mathbb{R}^n$,
	\begin{equation}\label{C11e}
		\|D^2u\|_{L^\infty(B_{1/2})} \leq C\left(n, \|u\|_{L^\infty(B_2)}\right).
	\end{equation}
Their new and ingenious argument proceeds through a reduction to the Pogorelov estimate of Chou and Wang \cite{CW01}, rather than a Jacobi inequality or a Hessian doubling argument used in the lower dimensional cases. 
For related results and extensions to positive inhomogeneous terms, we refer the reader, without attempting to be exhaustive, to \cite{Fan26, GQ19, Mo25, Qiu24} and references therein.

In this paper, we give new proofs of Pogorelov's estimate \eqref{Pogo} for the Monge-Amp\`ere equation and the interior $C^{1,1}$ estimate \eqref{C11e} for the $\sigma_2$ equation, using the two-point maximum principle. 
In contrast to the classical proof of Pogorelov's estimate, as presented for example in \cite{GT01}, which differentiates the equation twice, our approach requires only one differentiation. Its main advantage is that it also yields an interior Hessian estimate for the $\sigma_2$ equation in arbitrary dimensions. While our proof of the Pogorelov estimate is more technically complicated than the classical proof, in the case of the interior Hessian estimate for the $\sigma_2$ equation, we would also view our new proof  as a viable alternative proof  to that in  \cite{LWarX, SY25,WY09}.

Our first result concerns the Monge-Amp\`ere equation.
\begin{theorem}\label{thm1}
Let $\Omega\subset\mathbb R^n$ be a bounded convex domain satisfying $B_1\subseteq\Omega\subseteq B_n.$
Assume $f \in C^1(\Omega)$ satisfies $0<\lambda \leq f \leq \Lambda$, and $u\in C^3(\Omega)\cap C^0(\overline{\Omega})$ is a strictly convex solution of \eqref{MAeq}
{satisfying $u=0$ on $\partial\Omega$}.
Then, for any $\Omega' \Subset \Omega'' \Subset \Omega$, there exists a constant
	\[ C = C\left(n,\lambda,\Lambda,\Omega',\Omega'', \|f\|_{C^1(\Omega'')}, \|u\|_{C^1(\Omega'')} \right) \]
such that
	\begin{equation}\label{MAC2}
	    \|D^2u\|_{L^\infty(\Omega')}\le C.
	\end{equation}
\end{theorem}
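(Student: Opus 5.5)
We would prove \cref{thm1} with a two-point maximum principle applied to second difference quotients of $u$ rather than to $D^2u$ itself, so that only one derivative of \eqref{MAeq} is used. Fix a unit vector $e$. For $x\in\Omega$ and small $h>0$ set
\[
\delta^2_{h,e}u(x)=\frac{u(x+he)+u(x-he)-2u(x)}{h^2}.
\]
This is a function of the three points $x$ and $x\pm he$, and $\sup_e\limsup_{h\to0}\delta^2_{h,e}u=\lambda_{\max}(D^2u)$. Since $u$ is convex, a bound on $\lambda_{\max}$ bounds $|D^2u|$. Since $u<0$ in $\Omega$ and $u=0$ on $\partial\Omega$, we use the Pogorelov-type cutoff $w=(-u)^\beta$ together with a gradient term. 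We then maximise
\[
Q(x,h,e)=\log\delta^2_{h,e}u(x)+\beta\log(-u(x))+\tfrac{a}{2}|Du(x)|^2
\]
over $x$, $e\in S^{n-1}$ and $0<h\le h_0$. The sub-level set $\{u<-\epsilon\}$ contains $\Omega''$-type sets and is compactly contained in $\Omega$; this, together with convexity and the bounds $B_1\subseteq\Omega\subseteq B_n$, gives the dependence on $\Omega',\Omega''$. We choose $h_0$ small relative to $\dist(\{u\le -\epsilon\},\partial\Omega)$, or else restrict the cutoff so that the points $x\pm he$ stay in $\Omega''$. If the maximum occurs at $h=h_0$, the difference quotient is bounded by $\|u\|_{C^1}/h_0$ and we are done. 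So we may assume the maximum is interior in $h$, or is attained in the limit $h\to0$. In the latter case the classical one-point argument applies, reinterpreted as below.

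At an interior maximum point $(x_0,h,e)$ we perturb the three points $y_0=x_0$, $y_\pm=x_0\pm he$ independently, keeping the combination a second difference. The key freedom is to move $y_\pm$ by $\xi\pm A\xi$ and $y_0$ by $\xi$, where $A$ is a matrix to be chosen. The natural choices are $A=0$ (translation), or a reflection/rotation-type coupling as in Andrews--Clutterbuck. This yields a first-order condition and a second-order inequality. We test the second-order inequality against the matrix $u^{ij}$ evaluated at the three points. By concavity of $\log\det$, the combination
\[
u^{ij}(y_0)\big[u_{ij}(y_+)+u_{ij}(y_-)-2u_{ij}(y_0)\big]
\]
is bounded below by $\log f(y_+)+\log f(y_-)-2\log f(y_0)$ plus error terms. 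These errors come from the mismatch of $u^{ij}$ at different points. We control them by using $D^2u(y_\pm)\ge0$ and the matrix inequality $\log\det B-\log\det A\le\tr(A^{-1}(B-A))$.

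The term $\log f(y_+)+\log f(y_-)-2\log f(y_0)$ is where only $C^1$ regularity of $f$ is used. It is bounded by $Ch\,\|f\|_{C^1}$, which after dividing by $h^2$ becomes $C/h$. This must be absorbed. Note that $\delta^2u\ge$ (large quantity) forces $h\,\delta^2_{h,e}u\lesssim |Du(y_+)-Du(y_-)|\le C$, so $1/h\gtrsim\delta^2u$ and the bad term is of the same order as the quantity we want to bound. It must therefore be beaten by good terms. These come from the gradient term $\tfrac a2|Du|^2$, which contributes $a\,u^{ij}u_{ki}u_{kj}=a\Delta u$, and from $\beta\log(-u)$, which contributes $\beta n/(-u)$ minus a squared gradient term handled by the first-order condition as in Pogorelov's proof.

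We expect the \textbf{main obstacle} to be the comparison of $u^{ij}$ between the three points. In the classical proof, third derivatives are controlled through the second differentiation and concavity. Here the analogue must come from choosing the coupling $A$ and using the first-order conditions in $h$ and $e$. The $h$-derivative condition gives
\[
e\cdot\big(Du(y_+)-Du(y_-)\big)=2h\,\delta^2u,
\]
and the condition in $e$ says that $e$ is an eigen-direction of the averaged Hessian. Together these should provide the missing negative term. Our first attempt would be the translation coupling, with the matrix $u^{ij}(y_0)$ applied at all three points, plus the Schur-type inequality
\[
\tr\big(A^{-1}B\big)\ge n\left(\det B/\det A\right)^{1/n}.
\]
With these, the errors collapse to the $\log f$ second difference. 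If that fails, we would instead weight the second variations at $y_\pm$ by $u^{ij}(y_\pm)$, using a two-point combination. Combining the inequalities gives
\[
(-u)^\beta\,\delta^2u\le C
\]
at the maximum, which yields \eqref{MAC2} on $\Omega'$.
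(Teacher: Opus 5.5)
\textbf{There is a genuine gap, and it is exactly where the hypothesis $f\in C^1$ matters.}

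Your own bookkeeping exposes it. At a maximiser, the $\log f$ contribution to the normalised second variation is $\frac{g(y_+)+g(y_-)-2g(y_0)}{h^2\,\delta^2_{h,e}u}$ with $g=\log f$. For $g\in C^1$ this is only $O\bigl(\|Dg\|_\infty/(h\,\delta^2_{h,e}u)\bigr)$.

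The bound $h\,\delta^2_{h,e}u\le 2\|Du\|_\infty$ that you invoke goes the wrong way: it shows the error is \emph{at least} comparable to what you control, not at most. Nothing in $Q$ keeps the maximising $h$ away from $0$. The supremum in $h$ may be approached as $h\to0$, where $\delta^2_{h,e}u\to u_{ee}$ stays bounded pointwise. So $1/(h\,\delta^2_{h,e}u)$ is unbounded, while none of your good terms scales like $1/h$. Your $Q$ contains no function of $h$ whose concavity could produce a compensating term.

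A sign correction: $\beta\log(-u)$ contributes $\beta n/u<0$ after contraction with $u^{ij}$, not $+\beta n/(-u)$. So the only good term you have is $a\Delta u$.

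Your fallback for $h\to0$, ``the classical one-point argument'', is Pogorelov's twice-differentiated computation. It needs $g_{ee}$ and fourth derivatives of $u$, i.e.\ $f\in C^{1,1}$ and $u\in C^4$, which is precisely what the theorem avoids. A second-difference version of Pogorelov's proof is in effect a second differentiation and inherits the $C^{1,1}$ requirement.

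There is a further problem. Replacing the concavity of $\log\det$ by the bare inequality $\log\det B-\log\det A\le\tr(A^{-1}(B-A))$ discards the concavity gap. After dividing by $h^2$, that gap is the discrete analogue of $u^{ik}u^{jl}u_{eij}u_{ekl}$. In the classical proof this is the term that controls $-u^{ij}(\log u_{ee})_i(\log u_{ee})_j$. So even with smooth $f$ the argument would not close as written, and the decisive steps in your proposal remain conditional.

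\textbf{What the paper does instead.} It differentiates only once. With $v=u_e$ and $u^{ij}v_{ij}=g_e$, it shows $Z(x,y)=v(y)-v(x)-\varphi(q(x,y))-K(\rho(x)+\rho(y))\le0$ near $(x_0,x_0)$. Here $q=\langle Du(y)-Du(x),y-x\rangle^{1/2}$ is a Hessian-adapted distance, and $\rho=\rho_{x_0}$ localises to a section $S^0_H(x_0)$; no $(-u)^\beta$ cutoff is used.

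The $f$-errors are then $g_e(y)-g_e(x)=O(1)$ and $\frac{\varphi'(q)}{2q}\langle Dg(y)-Dg(x),y-x\rangle$. The latter is controlled via $|x-y|\le c_0^{-1/\beta}q^{1/\beta}$. Both are absorbed by the good term $-2\varphi''(q)$, with $\varphi'$ solving a linear ODE.

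The point mismatch you flagged is handled by the test matrix $\mathcal W$. It has diagonal blocks $D^2u(x)^{-1}$, $D^2u(y)^{-1}$ and off-diagonal block $C=2D^2u(x)^{-1/2}\Pi D^2u(x)^{1/2}(D^2u(x)+D^2u(y))^{-1}$, where $\Pi$ projects off $D^2u(x)^{-1/2}D_xB$. This matrix:
\begin{itemize}
\item is nonnegative, by $\|2A^{1/2}(A+B)^{-1}B^{1/2}\|\le1$;
\item satisfies $D_xB^TC=0$, which kills the cross terms;
\item gives $\operatorname{Tr}(\mathcal WD^2B)=4+\langle Dg(y)-Dg(x),y-x\rangle$ and $\langle\mathcal WDB,DB\rangle\ge8q^2$.
\end{itemize}
Letting $y=x_0+s\xi\to x_0$ yields $Dv^T(D^2u)^{-1}Dv\le A^2$, i.e.\ $u_{ee}(x_0)\le A^2$.

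If you want to keep second differences, you would have to rescale, e.g.\ to $h^{-1}\bigl(u(x+he)+u(x-he)-2u(x)\bigr)-\omega(h)$ with a concave modulus $\omega$, as in the paper's $\sigma_2$ argument. Then the $f$-error is $O(1)$ and $-\omega''$ can absorb it. But you would still need a coupling that varies $h$, and the mismatch of $u^{ij}$ between points would still have to be handled.
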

By the gradient estimate and the convexity of $u$, the dependence of $C$ on $\|u\|_{C^1(\Omega'')}$ can be replaced by dependence on $\dist(\Omega'',\partial\Omega)$, $f$, and $\Omega$. 

The corresponding result for the quadratic $\sigma_2$ equation is as follows.
\begin{theorem}\label{thm2}
Let $n\geq 2$, and let $u\in C^3$ on a neighborhood of $\overline{B_1}$.
Suppose that $\lambda(D^2u)\in \Gamma_2$ and that $u$ satisfies \eqref{sig2eq} in $B_1$. 
Then
	\begin{equation}\label{sig2C2}
		\|D^2u\|_{L^\infty(B_{1/2})} \leq C,
	\end{equation}
where $C$ depends only on $n$ and $\|Du\|_{L^\infty(B_1)}$. 
\end{theorem}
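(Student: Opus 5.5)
We use the two-point maximum principle applied to second difference quotients, so the equation is differentiated only once. Define on $B_1\times B_1$ (or on pairs $x,y=x+h$) the quantity
\[
Q(x,y)=\eta(x,y)\,\frac{u(y)-u(x)-Du(x)\cdot(y-x)}{|y-x|^2}\cdot\psi(\ldots),
\]
or more symmetrically
\[
Z(x,y)=\eta\,\frac{\big(Du(y)-Du(x)\big)\cdot(y-x)}{|y-x|^2}.
\]
Here $\eta$ is a cutoff vanishing when $x$ or $y$ approaches $\partial B_1$, e.g.\ $\eta=(1-|x|^2)^\beta(1-|y|^2)^\beta$ or a product $\phi(x)\phi(y)$. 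An auxiliary function of $|Du|^2$ may be included, in the spirit of Pogorelov and of Chou and Wang \cite{CW01}. As $y\to x$, $Z$ recovers $\eta\,u_{ee}(x)$ in the direction $e=(y-x)/|y-x|$, so $\sup Z$ controls $\sup_{B_{1/2}}\lambda_{\max}(D^2u)$. Since $\lambda\in\Gamma_2$ gives $\Delta u>0$ and $|\lambda|\le C\lambda_{\max}$, this bounds $|D^2u|$.

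\textbf{Step 1 (the maximum).} Consider the maximum of $Z$. If it is attained on the diagonal, we instead use a one-point Pogorelov-type function. The choice of $Z$ makes the diagonal limit controllable by the interior off-diagonal analysis via the standard limiting argument. So the main case is an interior maximum at $(x_0,y_0)$ with $x_0\ne y_0$.

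\textbf{Step 2 (first- and second-order conditions).} At $(x_0,y_0)$, the first-order conditions in $x$ and $y$ express $D^2u(x)$ and $D^2u(y)$ acting on $y-x$ in terms of $Z$, $D\eta$ and the difference quotient. For the second-order condition, apply the operator
\[
\mathcal L=\sum_{i,j}\big(F^{ij}(x)\,\partial_{x_ix_j}+2C^{ij}\,\partial_{x_i}\partial_{y_j}+F^{ij}(y)\,\partial_{y_iy_j}\big)\le0,
\]
where $F^{ij}=\partial\sigma_2/\partial u_{ij}$ and $C^{ij}$ is a coupling matrix chosen so that the full block matrix is nonnegative. The natural choice is $C=(F(x))^{1/2}(F(y))^{1/2}$, possibly composed with a reflection across the hyperplane orthogonal to $y-x$, as in Andrews--Clutterbuck. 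Third derivatives of $u$ then appear only as $F^{ij}u_{ijk}$. Differentiating $\sigma_2(D^2u)=1$ once gives $F^{ij}u_{ijk}=0$ at each point, and these terms cancel. This is the key place where only one differentiation is used.

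\textbf{Step 3 (remaining terms).} What remains is: (i) the mixed terms $F^{ij}(x)-2C^{ij}+F^{ij}(y)$ contracted against $D^2u$-type quantities; (ii) cutoff terms $F^{ij}\eta_{ij}/\eta$ and $F^{ij}\eta_i\eta_j/\eta^2$; and (iii) the terms from $\partial|y-x|^{-2}$. We need $\sum F^{ii}\ge c(n)\sigma_1^{1/2}$, which holds since $\sigma_1\sigma_1^{ii}\ge\sigma_2$-type inequalities are available, together with concavity of $\sigma_2^{1/2}$ on $\Gamma_2$. The goal is to show that at the maximum the good term is at least $c\,F^{ee}Z^2/\eta$. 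This term comes from the strict convexity of the difference quotient in the $e$-direction, i.e.\ from Step 2's first-order relations, which give $u_{ee}(x)\approx u_{ee}(y)\approx Z/\eta$. The bad terms are bounded by $C(\|Du\|)\,(\sum F^{ii})\,Z$. This yields $Z\le C$.

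\textbf{Main obstacle.} The hard step is obtaining the good term with weight $F^{ee}$ rather than a smaller eigen-direction weight. For $\sigma_2$, $F^{ee}=\sigma_1-u_{ee}$ may be small when $u_{ee}$ is the largest eigenvalue. This is exactly the obstruction that forced extra hypotheses in \cite{SY25}. I would first try multiplying $Z$ by $\exp(\tfrac{a}{2}|Du|^2)$ evaluated at both points, in the spirit of Chou--Wang. Its second-order contribution is $a\,F^{ij}u_{ik}u_{jk}\ge a\,c\,\sigma_1\lambda_{\max}^2/\ldots$, which dominates regardless of $F^{ee}$, while the first-order conditions absorb the cross terms. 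If that fails, I would fall back on a case split according to whether $\lambda_{\min}\ge-\delta\sigma_1$, which is the Shankar--Yuan regime, or not. In the latter case, the negative eigenvalue forces $F^{ee}\gtrsim\sigma_1$ in the relevant direction.
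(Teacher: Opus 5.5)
\textbf{Gap 1: the diagonal case in Step~1.} Your $Z$ is a second difference quotient, $\eta\,\frac{(Du(y)-Du(x))\cdot(y-x)}{|y-x|^2}=\eta\int_0^1u_{ee}(x+t(y-x))\,dt$. Nothing pushes its supremum off the diagonal: for a quadratic solution and your first choice of $\eta$, the supremum is approached only as $y\to x$. Your fallback there is a one-point Pogorelov-type function. Analysing a maximum of $\eta(x)^2u_{ee}(x)$ requires $F^{ij}\partial_{ij}u_{ee}$, i.e.\ fourth derivatives and the twice-differentiated equation, while the theorem only assumes $u\in C^3$. Moreover, a purely interior one-point argument for $\sigma_2$ in general dimension, depending only on $\|Du\|_{L^\infty}$, is essentially the problem being solved. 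So Step~1 is circular, and no ``standard limiting argument'' rescues it.

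The paper uses a different kind of two-point function. With $B=u(y)-u(x)-\langle Du(x),y-x\rangle$, $s=|y-x|$ and the concave modulus $\omega(s)=\beta^{-1}\log(1+\beta As)$, it sets $Z=\frac{2B}{s}-\omega(s)-\kappa(|x|^2+|y|^2)$. This is a \emph{first} difference quotient compared against a modulus. Hence $Z\le0$ on the diagonal automatically and $Z\le 4M-\kappa<0$ on $\partial(B_1\times B_1)$, so any positive maximum occurs at $x\ne y$. The conclusion $Z\le0$ then gives $u_{ee}(0)\le A$ via $Z(0,se)=s(u_{ee}(0)-A)+o(s)$. The penalty is also additive rather than multiplicative. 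After passing to $\mathcal Q=\frac s2(Z-m)$, it contributes only terms of size $\kappa s\sum_iS_2^{ii}$, which carry a factor $s$; your terms $F^{ij}\eta_i\eta_j/\eta^2$ carry no such factor.

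\textbf{Gap 2: the obstacle you name yourself.} Neither proposed remedy resolves it.
\begin{itemize}
\item Since $F^{ij}u_{ik}u_{jk}=\sigma_1\sigma_2-3\sigma_3$, this quantity equals $\sigma_1$ when $n=2$ and is at most $\sigma_1$ whenever $\sigma_3\ge0$. So the factor $\exp(\frac a2|Du|^2)$ only produces a term of the same order as the cutoff contributions, which are of order $\sum_iF^{ii}=(n-1)\sigma_1$ times negative powers of $\eta$. It does not produce a term of order $\sigma_1\lambda_{\max}^2$.
\item The case split hands the regime $\lambda_{\min}\ge-\delta\sigma_1$ to Shankar--Yuan, whose proof differentiates twice and is tied to a specific $c(n)$. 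The claim $F^{ee}\gtrsim\sigma_1$ in the complementary regime is asserted, not shown.
\end{itemize}

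In the paper the good term comes from $\omega''=-\beta(\omega')^2$. The reflected variation uses $\mathcal W=\begin{pmatrix}I\\R\end{pmatrix}S_2'(x)\begin{pmatrix}I&R\end{pmatrix}$, with $R$ the reflection fixing $(y-x)^\perp$. It produces $2s\beta(\omega')^2S_2^{11}$, to be compared with $2\kappa s(n-1)\Delta u$.
\begin{itemize}
\item If $u_{11}\le 8\omega'$, the possible smallness of $S_2^{11}$ is offset by the algebraic bound $u_{11}^2S_2^{11}\ge\frac13\Delta u$, which follows from $\sigma_2=1$.
\item If $u_{11}>8\omega'$, the same reflected inequality instead forces $\Delta u\le 2u_{11}$. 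The translation variation $\mathcal W=\begin{pmatrix}I\\I\end{pmatrix}S_2'(x)\begin{pmatrix}I&I\end{pmatrix}$ then gives the contradiction. It uses $u_{11}(y)\le L<2\omega'$, obtained from $\mathcal Q_{y_1y_1}\le0$, and the directional comparison $S_2^{ij}(x)u_{ij}(y)\ge u_{11}(x)/(L+1)$ of Appendix~B.
\end{itemize}
Both test matrices carry $S_2'(x)$ in every block. This is legitimate because the one-sided $B$ has third derivatives only at $x$. It also avoids your coupling $F(x)^{1/2}F(y)^{1/2}$, whose trace against $D^2u(x)+D^2u(y)$ your outline leaves uncontrolled.
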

Applying this estimate on interior balls and using the gradient estimate in \cite{Tr97}, we obtain the same conclusion with $C$ depending only on $n$ and $\|u\|_{L^\infty(B_1)}$.

\begin{remark}
The method developed in this paper also extends to the Monge-Amp\`ere and 
$\sigma_2$ equations with more general right-hand sides, 
including those depending on the solution and its gradient. 
We have established the corresponding interior Hessian estimates under Dini continuity assumptions on the right-hand data.
The precise statements and proofs will be presented in a subsequent version of this paper.  We also remark that through a modification 
of the proof of boundary $C^2$ estimates for the Neumann problem case in \cite{ JT1, JT2}  a global $C^2$ estimate for the Neumann problem for
the $\sigma_2$ equation for mean convex domains follows from a corresponding interior Hessian estimate and the details will also be provided
in an ensuing paper by the authors of \cite{ JT1, JT2}.
\end{remark}

The paper is organised as follows. We prove Theorems \ref{thm1} and \ref{thm2} in Sections 2 and 3, respectively.
The auxiliary matrix inequalities and computations used in the proofs are collected in {the appendices}. 

\noindent{\bf Acknowledgment:} 
We thank the mathematical research institute MATRIX in Australia, where part of this research was performed. 
We are particularly grateful to Professor Ben Andrews for introducing the second author to the two-point maximum principle at the MATRIX-RIMS Tandem Workshop in March 2024 and for many inspiring discussions. 

The authors acknowledge the use of OpenAI tools in the preparation of this manuscript, including language editing, LaTex formatting, and the organisation of the presentation. The authors have reviewed the complete manuscript and take full responsibility for all mathematical statements and proofs.

\section{Interior \texorpdfstring{$C^2$}{C2} estimate for the Monge-Amp\`{e}re equation}
In this section, we study the Monge-Amp\`{e}re equation
	\begin{equation}\label{eq-MA}
	\begin{aligned}
		\det D^2 u &= f \quad \text{in } \Omega,\\
		u &= 0\quad \text{on } \partial \Omega.
	\end{aligned}
	\end{equation}
where $\Omega\subset\mathbb R^n$ is a bounded convex domain, $u$ is strictly convex, and
	\begin{align}\label{eq-lower-upper-bound-f}
		0<\lambda\le f\le\Lambda.
	\end{align}
Our goal is to derive an interior second derivative estimate.

Our method is based on a two-point maximum principle argument.
We first differentiate \eqref{eq-MA} along a direction $e\in\mathbb S^{n-1}$. Denoting
	\[ v:=u_e,    \qquad    g:=\log f, \]
we obtain
	\begin{equation} \label{eq-linearized-MA}
    		u^{ij}v_{ij}=g_e,
	\end{equation}
where $(u^{ij})=(u_{ij})^{-1}$ is the inverse of Hessian matrix $D^2u$.
The goal is then to derive an interior gradient estimate for $v$.
The idea can be outlined as follows.

Fixing a point $x_0\in\Omega$, we shall construct a suitable two-point function
	\begin{equation}\label{2p4MA}
	    Z=Z(x,y)=v(y)-v(x)-\Phi(x,y), 
	\end{equation}
where $\Phi$ is symmetric in $x$ and $y$ and satisfies, for any $\xi\in\mathbb R^n$,
	\begin{equation}\label{eq-Phi-diagonal-expansion}
    		\Phi(x_0,x_0+s\xi) = A|s| \sqrt{\xi^T D^2u(x_0)\xi} +o(s)  \qquad\text{as } s\to0,
	\end{equation}
for some constant $A>0$ depending only on the $|u|_{C^1}$, $|f|_{C^1}$, and $\dist(x_0,\partial \Omega)$.
If we can prove that 
	\begin{equation}\label{minain}
		Z\le0 \quad\text{ in a neighbourhood of } \ (x_0,x_0),
	\end{equation}
then applying the resulting inequality with both $\xi$ and $-\xi$, dividing by $s>0$, and taking $s\to0$,
we will obtain
	\[  |\langle Dv(x_0),\xi\rangle|  \le  A\sqrt{\xi^T D^2u(x_0)\xi}, \]
where $\langle\, ,\, \rangle$ is the inner product in $\mathbb{R}^n$. 
Taking
	\[ \xi=(D^2u(x_0))^{-1}Dv(x_0), \]
we obtain
	\[ Dv(x_0)^T(D^2u(x_0))^{-1}Dv(x_0) \le  A\sqrt{Dv(x_0)^T(D^2u(x_0))^{-1}Dv(x_0)}, \]
and hence
	\begin{equation}\label{eq-dual-gradient-estimate-overview}
    		Dv(x_0)^T(D^2u(x_0))^{-1}Dv(x_0) \le A^2.
	\end{equation}
Since $v=u_e$, we have
	\[ Dv(x_0)=D^2u(x_0)e, \]
and therefore \eqref{eq-dual-gradient-estimate-overview} gives
	\begin{equation}\label{ueeC}
	  u_{ee}(x_0)\le A^2. 
	\end{equation}

Before carrying out this argument, we recall the sub-level sets of $u$ and the modulus of convexity used below.
\begin{definition}
For $x_0\in\Omega$ and $h>0$, the sub-level set of $u$ of height $h$ at $x_0$ is defined by
	\[ S_h^0(x_0) := \big\{ x\in\Omega : u(x) < u(x_0) + \langle Du(x_0), x-x_0 \rangle + h  \big\}. \]
For $x, y \in \Omega$, we define
	\begin{align}\label{eq-def-rho_x}
		\rho_{x}(y) := u(y)-u(x)- \langle Du(x), y-x\rangle.
	\end{align}
Then
	\[ S_h^0(x_0)=\{x\in\Omega : \rho_{x_0}(x) < h\}. \]
\end{definition}

Let $\Omega'\Subset\Omega''\Subset\Omega$. By the strict convexity of $u$, there exists a constant $H>0$ depending only on $n$, $\lambda$, $\Lambda$, $\Omega'$, and $\Omega''$, such that 
	\begin{equation}\label{scH0}
		S_H^0(x) \Subset \Omega'' \qquad \forall\, x \in \Omega'.
	\end{equation} 
We also use the following quantitative modulus of convexity (see, for example, \cite[\S2.3]{LW15}): There exist constants $c>0$ and $\gamma>0$, depending only on $n$, $\lambda$, $\Lambda$, and $\Omega$, such that
	\begin{equation}\label{eq-rho-lower-bound}
    		\rho_x(y) \ge c|x-y|^\gamma \qquad \forall\,x\in\Omega',\, y\in\Omega.
	\end{equation}

\begin{proof}[Proof of Theorem \ref{thm1}]
It remains to construct the two-point function \eqref{2p4MA} satisfying \eqref{eq-Phi-diagonal-expansion} and \eqref{minain}. 
By strict convexity and \eqref{eq-def-rho_x}, we have $\rho_x(y)>0$ whenever $x\neq y$. Define
	\begin{align}\label{eq-Bregman-divergence}
		B(x,y) = \rho_{x}(y) + \rho_y(x) = \langle Du(y)-Du(x),y-x\rangle.
	\end{align}
We have $B(x,y)$ is symmetric in $x$ and $y$, and 
	\[ B(x,y)>0 \quad\text{whenever }x\neq y. \]
Applying \eqref{eq-rho-lower-bound}, we obtain
	\begin{align}\label{eq-q-lower-bound}
		q(x,y):=\sqrt{B(x,y)} \ge c_0 |x-y|^\beta \qquad\forall\,x,y\in\Omega'',
	\end{align}
where $c_0, \beta>0$ depends only on $n$, $\lambda$, $\Lambda$, $\Omega'$, and $\Omega''$.

Fix $e\in\mathbb S^{n-1}$ and $x_0\in\Omega'$. 
As above, set $v=u_e$ and $g=\log f$, so that~\eqref{eq-linearized-MA} holds, and write $\rho=\rho_{x_0}$, where $\rho_{x_0}$ is given by \eqref{eq-def-rho_x} with $x$ replaced by $x_0$.

We define the two-point function
	\begin{equation}\label{eq-two-point-Z}
    		Z(x,y) := v(y)-v(x) -\varphi(q(x,y)) -K\bigl(\rho(x)+\rho(y)\bigr),
	\end{equation}
where the constant $K>0$ and the function $\varphi$ will be chosen below.
Consider a neighbourhood of $(x_0,x_0)$ in $\Omega\times\Omega$ given by
	\[ \mathcal D := \big\{(x,y)\in S_H^0(x_0)\times S_H^0(x_0)\,:\,q(x,y)<1\big\}. \]
We shall verify that $(i)$ the inequality \eqref{minain} $Z\leq0$ holds in $\mathcal{D}$, and $(ii)$ $\Phi(x,y)=\varphi(q(x,y)) + K\bigl(\rho(x)+\rho(y)\bigr)$ satisfies \eqref{eq-Phi-diagonal-expansion}.

For simplicity, we write $M = 2 \|u\|_{C^1(\Omega'')}$.
We now choose $K$ and $\varphi$ such that
	\begin{equation}\label{eq-boundary-conditions}
 		\begin{gathered}
  			KH=M+1,\qquad \varphi(0)=0,\qquad \varphi(1)>M,\\
 			 \varphi\ge0,\quad\varphi'>0\quad\text{on }[0,1],  \qquad \text{and }\ \ \varphi''<0\quad\text{on }(0,1].
		 \end{gathered}
	\end{equation}
If $(x,y)\in\partial\mathcal{D}$, then either one of $x,y$ lies on $\partial S_H^0(x_0)$, or $q(x,y)=1$.
Whenever one of $x,y$ lies on $\partial S_H^0(x_0)$, one has
	\[ Z(x,y)\le M-KH<0. \]
Likewise, on $\{q=1\}$,
	\[ Z(x,y)\le M-\varphi(1)<0. \]
Hence, we have $Z<0$ on $\partial\mathcal{D}$. 
On the diagonal,
	\[ Z(x,x)=-2K\rho(x)\le0. \]
Consequently, if $Z$ has a positive maximum in
$\overline{\mathcal D}$, then it is attained at a pair
	\begin{equation}\label{eq-interior-max-pair}
   		(x,y)\in S_H^0(x_0)\times S_H^0(x_0), \quad x\neq y, \quad 0<q(x,y)<1.
	\end{equation}

We now rule out such a positive interior maximum.
By a straightforward computation, as $2n\times 2n$ matrices, at a maximum point $(x,y)$ we have
\begin{equation}\label{eq-Hessian-Z}
\begin{aligned}
0\geq D^2 Z={}&
\begin{pmatrix}
    -D^2v(x) & 0\\
    0 & D^2v(y)
\end{pmatrix}
-K \begin{pmatrix}
    D^2\rho(x) & 0\\
    0 & D^2\rho(y)
\end{pmatrix}
\\
& - \frac{\varphi'(q)}{2q}D^2B - \frac{q\varphi''(q)-\varphi'(q)}{4q^3} DB\otimes DB,
\end{aligned}
\end{equation}
where $B=q^2$ is given by \eqref{eq-Bregman-divergence}.
Direct differentiation of $B$ gives
	\begin{equation}\label{eq-gradient-B-x}
    		D_xB = -D^2u(x)(y-x) - \bigl(Du(y)-Du(x)\bigr),
	\end{equation}
and
	\begin{equation}\label{eq-gradient-B-y}
    		D_yB = D^2u(y)(y-x) + \bigl(Du(y)-Du(x)\bigr).
	\end{equation}
Moreover,
	\begin{equation}\label{eq-Hessian-B}
    		D^2B = \begin{pmatrix}
        			2D^2u(x)-D^3u(x)[y-x] &  -D^2u(x)-D^2u(y)   \\[1mm]
        			-D^2u(x)-D^2u(y)  & 2D^2u(y)+D^3u(y)[y-x]
    		\end{pmatrix},
	\end{equation}
where
	\[ \bigl(D^3u(x)[y-x]\bigr)_{ij} = u_{ijk}(x)(y_k-x_k), \]
and a similar expression for $D^3 u(y)[y-x]$.

For any positive semi-definite $2n\times 2n$ matrix $\mathcal{W}$, \eqref{eq-Hessian-Z} implies $\operatorname{Tr}(\mathcal{W}D^2Z(x,y))\leq 0$. To derive a contradiction, we shall construct a suitable $\mathcal{W}$ so that $\operatorname{Tr}(\mathcal{W}D^2Z(x,y))> 0$. Such a contradiction then implies $Z\leq0$ in $\mathcal{D}$. 

Consider a $2n \times 2n$ matrix $\mathcal W$ of the form
	\begin{equation}\label{eq-block-W}
    		\mathcal W := \begin{pmatrix}
		        D^2 u(x)^{-1} & C\\
        			C^T & D^2 u(y)^{-1}
    		\end{pmatrix},
	\end{equation}
for some $n\times n$ matrix $C$.
In the following, $(x,y)$ denotes a positive interior maximum pair of points
of $Z$ as in \eqref{eq-interior-max-pair}.
We will choose $C$ so that $\mathcal W \ge 0$ at $(x,y)$.
Let
	\begin{align}\label{eq-def-zeta}
		\zeta:=\frac{D^2u(x)^{-1/2}D_xB}{|D^2u(x)^{-1/2}D_xB|},
	\end{align}
and let $\Pi$ be the orthogonal projection onto the complement of $\zeta$, i.e.,
	\begin{align}\label{eq-def-Pi}
		\Pi:=I-\zeta\otimes\zeta, \qquad \Pi(v)=v-\langle v,\zeta\rangle\zeta\quad\forall\,v\in\mathbb{R}^n.
	\end{align}
Notice that $\zeta$ is well defined since $D_xB\neq0$. Indeed, if $D_xB=0$, then, by \eqref{eq-gradient-B-x},
	\[ D^2u(x)(y-x)+Du(y)-Du(x)=0, \]
and taking the inner product with $y-x$ gives
	\[ (y-x)^TD^2u(x)(y-x)+q^2=0, \]
which is impossible by \eqref{eq-interior-max-pair}.

We now choose
	\begin{equation}\label{eq-def-C}
    		C := 2D^2u(x)^{-1/2} \Pi D^2u(x)^{1/2} \bigl(D^2u(x)+D^2u(y)\bigr)^{-1}.
\end{equation}
To verify that $\mathcal W\geq0$, write
	\[ C = D^2u(x)^{-1/2} \mathcal K D^2u(y)^{-1/2}, \]
where
	\[ \mathcal K := 2\Pi D^2u(x)^{1/2} \bigl(D^2u(x)+D^2u(y)\bigr)^{-1} D^2u(y)^{1/2}. \]
By Lemma \ref{lem-matrix-contraction} with $A = D^2u(x)$ and $B = D^2 u(y)$, we have
	\[ \left\| 2D^2u(x)^{1/2} \bigl(D^2u(x)+D^2u(y)\bigr)^{-1} D^2u(y)^{1/2} \right\| \le1. \]
Since $\Pi$ is an orthogonal projection, it follows that
	\[ \|\mathcal K\|\le1. \]
Hence, by Lemma \ref{lem-positive-block-matrix}, we have $\mathcal W \ge 0$.
Consequently, \eqref{eq-Hessian-Z} implies that, at a positive interior maximum $(x,y) \in \mathcal D$ of $Z$, we have
	\begin{align}\label{eq-max-pair-trace<=0}
		0\geq \operatorname{Tr}(\mathcal W D^2 Z(x,y)).
	\end{align}
	
We next compute this trace $\operatorname{Tr}\bigl(\mathcal W D^2Z\bigr)$. Noticing that since $D^2 \rho = D^2 u$, we have
	$$\operatorname{Tr} (D^2 u^{-1} D^2 \rho) = n.$$
And from \eqref{eq-linearized-MA}, 
	$$\operatorname{Tr} (D^2 u^{-1} D^2 v) = g_e.$$
Substituting the above two equalities into the trace in \eqref{eq-max-pair-trace<=0},
we have
	\begin{equation}\label{eq-trace-W-Hessian-Z}
	\begin{aligned}
		0 \geq \operatorname{Tr}\bigl(\mathcal W D^2Z\bigr) ={} & g_e(y)-g_e(x)-2nK-\frac{\varphi'(q)}{2q}T_C-\frac{q\varphi''(q)-\varphi'(q)}{4q^3}Q_C\\
							={}& g_e(y)-g_e(x)-2nK-\frac{\varphi''(q)}{4q^2}Q_C+\frac{\varphi'(q)}{4q^3}\bigl(Q_C-2q^2T_C\bigr),
	\end{aligned}
	\end{equation}
where 
	\begin{align}\label{eq-def-T_C}
		T_C:=\operatorname{Tr}\bigl(\mathcal W D^2B\bigr)
	\end{align}
and
	\begin{align}\label{eq-def-Q_C}
		Q_C:=\langle\mathcal W DB,DB\rangle.
	\end{align}

We now compute $T_C$ and $Q_C$, with $C$ given by \eqref{eq-def-C}.
To compute $T_C$, differentiating
	\[ \log\det D^2u=g \]
in the direction $y-x$, we have
	\[ \operatorname{Tr} \left( D^2u(x)^{-1}D^3u(x)[y-x] \right) = \langle Dg(x),y-x\rangle, \]
and a similar expression for $\operatorname{Tr}\big(D^2u(y)^{-1}D^3u(y)[y-x]\big)$.
Hence, by \eqref{eq-Hessian-B}, \eqref{eq-block-W}, and \eqref{eq-def-T_C}, we have
	\begin{align*}\label{eq-TC-first}
		T_C=4n+\langle Dg(y)-Dg(x),y-x\rangle-2\operatorname{Tr}\left(C(D^2u(x)+D^2u(y))\right).
	\end{align*}
Moreover, by \eqref{eq-def-C},
\[
\begin{aligned}
\operatorname{Tr}\left(C\bigl(D^2u(x)+D^2u(y)\bigr)\right) &= 2\operatorname{Tr}\left( D^2u(x)^{-1/2} \Pi D^2u(x)^{1/2} \right) \\
	&= 2\operatorname{Tr}(\Pi) = 2(n-1).
\end{aligned}
\]
Therefore,
	\begin{equation}\label{eq-TC}
    		T_C = 4+ \langle Dg(y)-Dg(x),y-x\rangle.
	\end{equation}

We next compute $Q_C$. First, note that, by \eqref{eq-def-zeta},
	\[ D_xB^TD^2u(x)^{-1/2} = |D^2u(x)^{-1/2}D_xB|\,\zeta^T. \]
Since $\Pi$ is the orthogonal projection onto $\zeta^\perp$ given by \eqref{eq-def-Pi}, from \eqref{eq-def-C} we have
	\[ D_xB^TC = 2|D^2u(x)^{-1/2}D_xB|\, \zeta^T\Pi D^2u(x)^{1/2} (D^2u(x)+D^2u(y))^{-1} =0.\]
In particular,
	$$D_x B^T C D_y B = D_y B^T C^T D_xB = 0.$$
Consequently,
	\begin{align}\label{eq-QC-diagonal}
		Q_C=D_xB^TD^2u(x)^{-1}D_xB+D_yB^TD^2u(y)^{-1}D_yB.
	\end{align}
Using \eqref{eq-gradient-B-x} and \eqref{eq-gradient-B-y}, we expand the first term as
	\begin{align*}
		&D_xB^TD^2u(x)^{-1}D_xB\\
		=&\,(y-x)^TD^2u(x)(y-x)+\bigl(Du(y)-Du(x)\bigr)^T D^2u(x)^{-1}\bigl(Du(y)-Du(x)\bigr)+2q^2.
	\end{align*}
By the Cauchy--Schwarz inequality, we have
	\begin{align*}
		q^2&=\left\langle Du(y)-Du(x),y-x\right\rangle\\
		&=\left\langle D^2u(x)^{-1/2}\bigl(Du(y)-Du(x)\bigr), D^2u(x)^{1/2}(y-x)\right\rangle\\
		&\le \big|D^2u(x)^{-1/2}(Du(y)-Du(x))\big| \cdot \big|D^2u(x)^{1/2}(y-x)\big|\\
		& \le \frac{1}{2} \bigl(Du(y)-Du(x)\bigr)^T D^2u(x)^{-1} \bigl(Du(y)-Du(x)\bigr) + \frac{1}{2} (y-x)^TD^2u(x)(y-x).
	\end{align*}
Hence,
	\[ D_xB^TD^2u(x)^{-1}D_xB\ge4q^2. \]
The same argument at $y$, using~\eqref{eq-gradient-B-y}, gives
	\[ D_yB^TD^2u(y)^{-1}D_yB\ge4q^2. \]
Thus, by \eqref{eq-QC-diagonal}, we obtain
	\begin{equation}\label{eq-QC-lower-bound}
    		Q_C\ge8q^2.
	\end{equation}

Substituting \eqref{eq-TC} and \eqref{eq-QC-lower-bound} into \eqref{eq-trace-W-Hessian-Z}, by \eqref{eq-boundary-conditions} we obtain
	\begin{align}\label{eq-main-trace-Z-lower-bound}
		\operatorname{Tr}\bigl(\mathcal W D^2Z\bigr)\ge g_e(y)-g_e(x) -\frac{\varphi'(q)}{2q}\langle Dg(y)-Dg(x),y-x\rangle-2nK-2 \varphi''(q).
	\end{align}
Now, for simplicity, set
	\[ G:=\|Dg\|_{L^\infty(\Omega'')}. \]
By \eqref{eq-q-lower-bound},
	\[ |x-y| \le c_0^{-1/\beta}q^{1/\beta}. \]
Therefore, by $\varphi' > 0$ as assumed in \eqref{eq-boundary-conditions}, it follows that
	\begin{align*}
		&\big|g_e(y)-g_e(x)-\frac{\varphi'(q)}{2q}\langle Dg(y)-Dg(x),y-x\rangle\big|\\
		&\qquad\le 2G+Gc_0^{-1/\beta}\varphi'(q)q^{1/\beta-1}.
	\end{align*}
Hence, by \eqref{eq-main-trace-Z-lower-bound}, we obtain
	\begin{align}\label{eq-main-trace-Z-lower-bound-1}
		\operatorname{Tr}\bigl(\mathcal W D^2Z\bigr) \ge-2nK-2 \varphi''(q) - (2G+Gc_0^{-1/\beta}\varphi'(q)q^{1/\beta-1}).
	\end{align}

We now choose $\varphi$ as follows. 
Let $h=\varphi'$ solve
	\begin{equation}\label{eq-varphi-ODE}
    		h'(q) + \frac{Gc_0^{-1/\beta}}{2} q^{1/\beta-1}h(q) = -(nK+G+1),  \qquad   h(0)=A.
	\end{equation}
Since
	\[ \frac1\beta-1 > -1, \]
the coefficient in \eqref{eq-varphi-ODE} is integrable at $q=0$. Explicitly, the solution is
	\begin{align*}
		h(q)=\exp\left( -\frac{\beta Gc_0^{-1/\beta}}{2} q^{1/\beta} \right) \left[ A - (nK+G+1)\int_0^q \exp\left( \frac{\beta Gc_0^{-1/\beta}}{2} s^{1/\beta} \right)\,ds \right].
	\end{align*}
We choose $A>0$ sufficiently large so that
	\[ h(q)>0   \qquad\text{for }0\le q\le 1 \]
and
	\[ \int_0^1h(q)\,dq>M. \]
We point out that such an $A$ depends only on $n$, $\lambda$, $\Lambda$,  $\Omega'$, $\Omega''$, $\|f\|_{C^1(\Omega'')}$, and $\|u\|_{C^1(\Omega'')}$.
Define
\begin{equation}
\label{eq-def-varphi}
    \varphi(q):=\int_0^qh(s)\,ds.
\end{equation}
Then \eqref{eq-boundary-conditions} is satisfied as
	\[ \varphi'>0, \qquad \varphi''<0, \qquad \varphi(1)>M, \]
and \eqref{eq-varphi-ODE} gives
	\begin{equation}\label{eq-varphi-absorption}
    		-2\varphi''(q) = 2nK+2G+2 + Gc_0^{-1/\beta} \varphi'(q)q^{1/\beta-1}.
	\end{equation}
	
Substituting \eqref{eq-varphi-absorption} into \eqref{eq-main-trace-Z-lower-bound-1}, we therefore obtain
	\[ \operatorname{Tr}(\mathcal W D^2Z) \ge 2,\]
which contradicts \eqref{eq-max-pair-trace<=0}.
Hence, the inequality \eqref{minain} is proved, namely
	\begin{equation}\label{eq-Z-nonpositive}
    		Z(x,y) \leq 0    \qquad \forall\,(x,y)\in \overline{\mathcal D}.
	\end{equation}

\vskip5pt
We now derive the desired estimate that $\Phi(x,y)=\varphi(q(x,y)) + K\bigl(\rho(x)+\rho(y)\bigr)$ in the two-point function \eqref{eq-two-point-Z} satisfies \eqref{eq-Phi-diagonal-expansion} at $x_0$.
For $\xi\in\mathbb R^n$ and $s$ sufficiently small, set
	\[ y=x_0+s\xi. \]
Since $\rho(x_0)=0$, $\rho(x_0+s\xi)=O(s^2)$, and from \eqref{eq-Bregman-divergence}
	\[\begin{aligned}
		q(x_0,x_0+s\xi)^2 &= \big\langle Du(x_0+s\xi)-Du(x_0), s\xi \big\rangle \\
			&= s^2\xi^TD^2u(x_0)\xi+o(s^2),
	\end{aligned}\]
and consequently
	\[ q(x_0,x_0+s\xi) = |s| \sqrt{\xi^TD^2u(x_0)\xi} +o(|s|). \]
Since $\varphi(0)=0$ and $\varphi'(0)=A$, we have
	\[ \varphi(q(x_0,x_0+s\xi)) = A|s| \sqrt{\xi^TD^2u(x_0)\xi} +o(|s|). \]
Therefore, we obtain
	\[\begin{aligned}
		\Phi(x_0,x_0+s\xi) &= \varphi(q(x_0,x_0+s\xi)) + K\rho(x_0+s\xi) \\
			&= A|s| \sqrt{\xi^TD^2u(x_0)\xi} +o(|s|) \quad\text{ as } s\to0,
	\end{aligned}\]		
and \eqref{eq-Phi-diagonal-expansion} is verified. 
	
\vskip5pt	
Finally, apply \eqref{eq-Z-nonpositive} to $(x_0,x_0+s\xi)$ and $(x_0,x_0-s\xi)$, divide by $s>0$, and let $s\to0$. 
Following the argument at the beginning of this section, we obtain \eqref{ueeC}, i.e. $u_{ee}(x_0)\leq A^2$. 
Since $x_0\in\Omega'$ and $e\in\mathbb S^{n-1}$ are arbitrary, and $A$ is independent of these choices, this proves the desired interior second derivative estimate.
\end{proof}

\section{Interior \texorpdfstring{$C^2$}{C2} estimate for the $\sigma_2$ equation}

Let $n\ge2$ and $\lambda=(\lambda_1,\lambda_2,\cdots,\lambda_n)\in\mathbb{R}^n$. Recall the definitions ( see \cite{TW99,Wa09})
	\begin{align*}
		\sigma_1(\lambda) &:= \sum_{i=1}^n\lambda_i, \qquad \sigma_2(\lambda):= \sum_{1\leq i<j\leq n}\lambda_i\lambda_j, \\
		\Gamma_2 &:= \big\{ \lambda\in\mathbb{R}^n \,:\, \sigma_1(\lambda)>0,\, \sigma_2(\lambda)>0\big\}.
	\end{align*}
Suppose $u\in C^3$ on a neighbourhood of $\overline{B_1}$, with
	\[ \lambda(D^2u)\in\Gamma_2,\qquad \sigma_2\big(\lambda(D^2u)\big)=1.\]
In this section, we derive a bound for $|D^2u(0)|$ depending only on $n$ and $\|Du\|_{L^\infty(B_1)}$.
Similarly as in Section 2, our argument is based on the two-point maximum principle and requires only one differentiation of the equation.

The idea can be outlined as follows.
We shall construct a suitable two-point function 
	\begin{equation}\label{s2Zf}
		Z(x,y) = \frac{2}{s}\Big(u(y) - u(x) - \langle Du(x), y-x\rangle\Big) - \frac{1}{\beta}\log\left(1+\beta As\right) - \kappa\left(|x|^2+|y|^2\right),
	\end{equation}
where $s=|y-x|$, and $\beta, \kappa, A$ are positive constants depending only on $n$ and $\|Du\|_{L^\infty(B_1)}$.
For any unit vector $e\in\mathbb{S}^{n-1}$, we have
	\begin{equation*}
		Z(0,se) = s\big( u_{ee}(0) - A \big) + o(s) \qquad\text{ as } s\to0.
	\end{equation*}
If we can prove that $Z\leq 0$, then $\lambda_{\max}(D^2u(0)) \leq A$. 
Since $\Delta u > 0$, it thus follows that 
	\[ \|D^2u(0)\| \leq (n-1)A. \]
Hence, applying this result on interior balls, we obtain the desired estimate \eqref{sig2C2}.

\begin{proof}[Proof of Theorem \ref{thm2}]
For simplicity, we denote
	\begin{equation}\label{Baom}
		B(x,y) = u(y) - u(x) - \langle Du(x), y-x\rangle, \qquad \omega(s)=\frac{1}{\beta}\log\left(1+\beta As\right).
	\end{equation}
Then the two-point function \eqref{s2Zf} becomes
	\begin{equation}\label{Z4s2}
		Z(x,y) = \frac{2B(x,y)}{s} - \omega(s) - \kappa\left(|x|^2+|y|^2\right).
	\end{equation}	
The key part of the proof is to verify that 
	\begin{equation}\label{keyZve}
		Z\leq0 \qquad\text{in }\ B_1(0)\times B_1(0) 
	\end{equation}
for suitable choices of the constants $\kappa, \beta$, and $A$. 
Indeed, denote $M=\max\{1,\|Du\|_{L^\infty(B_1)}\}$. We shall show that the choices 
	\begin{equation}\label{ppcons}
		\kappa = 5M,\quad \beta=384\kappa n,\quad A=4\kappa e^{4\beta M}
	\end{equation}
suffice. 

By the definition of $M$, we have $|B(x,y)|\leq 2Ms$. 
If either one of $x, y$ lies on $\partial B_1$, then
	\[ Z \leq 4M -\kappa <0. \]
Hence, $Z<0$ on $\partial(B_1\times B_1)$.
On the diagonal, as $(x,y)\to(x_0,x_0)$, 
	\[ \frac{|B(x,y)|}{|y-x|} \le \sup_{0\le\theta\le1}|Du(x+\theta(y-x))-Du(x)|\longrightarrow0. \]
Hence, $Z(x_0,x_0) = -2\kappa|x_0|^2\leq0$. 

Consequently, if $Z$ has a positive maximum $m$ in $\overline{B_1\times B_1}$, then it is attained at distinct interior points $(x,y)\in B_1\times B_1$, $x\neq y$. It suffices to rule out such a positive maximum. 
All remaining calculations are made at this pair, and we follow the notation in \cite{TW99,Wa09}.

\vskip5pt

The $\sigma_2$ operator and its linearised coefficients are denoted by
	\begin{align}
 		S_2[u] &= \sigma_2(\lambda(D^2u)) = \frac12\bigl[(\Delta u)^2-|D^2u|^2\bigr], \label{eq:S2}\\  
		S_2^{ij}[u] &= \frac{\partial S_2[u]}{\partial u_{ij}} = (\Delta u)\delta_{ij}-u_{ij}, \label{eq:S2formula}
	\end{align} 
where $|D^2u|^2=\sum_{i,j}u_{ij}^2$.  
The equation \eqref{sig2eq}, namely $S_2[u]=1$, implies
	\begin{equation}\label{eq:ellipticity}
 		(\Delta u)^2-|D^2u|^2=2.
	\end{equation}
Consequently $\Delta u>|D^2u|$, and the matrix $(S_2^{ij}[u])$ is positive definite. Indeed, $\forall\,\xi\in\mathbb{S}^{n-1}$,
	\begin{equation}\label{eq:directional-ellipticity}
 		S_2^{ij}[u]\xi_i\xi_j =\Delta u-u_{\xi\xi} \ge\Delta u-|D^2u|>0.
	\end{equation}
We shall use the identities
	\begin{equation}\label{eq:linearization}
 		S_2^{ij}[u]u_{ij}=2, \qquad  \sum_i S_2^{ii}[u]=(n-1)\Delta u,  \qquad  S_2^{ij}[u]u_{ij\ell}=0\quad(\ell=1,\ldots,n),
	\end{equation}
where the last identity follows by differentiating the equation \eqref{sig2eq} once.

\vskip5pt
At an interior maximum point $(x,y)$ of $Z$, we shall derive information from the first-order differentiation $DZ=0$ and the second-order differentiation $D^2Z\leq 0$ as a $2n\times 2n$ matrix. 

\noindent\textbf{First-order differentiation.}
By a rotation of coordinates, we may assume that
	\begin{equation}\label{eq:coordinates}
 		e_1=\frac{y-x}{|y-x|},\quad y-x=se_1,\quad s=|y-x|>0.
	\end{equation}
For simplicity, we write
	\[ u_{ij}=u_{ij}(x),\qquad \Delta u=\Delta u(x),\qquad S_2^{ij}=S_2^{ij}[u](x). \]
Derivatives evaluated at $y$ will always be written explicitly.

From \eqref{Baom}, $B_{x_i}=-su_{i1}$. The condition $Z_{x_i}=0$ gives
	\begin{equation}\label{eq:first-order}
 		u_{i1}=\left(\frac{B}{s^2}+\frac{\omega'}2\right)\delta_{i1} -\kappa x_i,	\qquad i=1,\ldots,n,
	\end{equation}
where $\omega'$ is evaluated at $s$.
Since $Z>0$ and $\omega$ is concave, we have
	\[ \frac{2B}s>\omega(s)+\kappa(|x|^2+|y|^2) \ge s\omega'+\kappa(|x|^2+|y|^2). \]
Using $|y|^2=|x|^2+2sx_1+s^2$ in~\eqref{eq:first-order}, we obtain
	\begin{equation}\label{eq:curvature-lower}
 		u_{11}>\omega'+\frac{\kappa s}{2}+\frac{\kappa|x|^2}{s}.
	\end{equation}
In particular, $u_{11}>\omega'>0$.

By differentiating $\omega$ in \eqref{Baom}, we have
	\begin{equation}\label{eq:omega-derivatives}
		 \omega'(s)=\frac{A}{1+\beta As}, \qquad \omega''(s)=-\beta\omega'(s)^2.
	\end{equation}
At the positive maximum, $Z>0$ implies $\omega(s)<4M$. By the choice of constants in \eqref{ppcons}, we have
	\begin{equation}\label{omyige}
		\omega'=Ae^{-\beta\omega(s)}>4\kappa \geq 20, \qquad  s<\frac{e^{4\beta M}-1}{\beta A}<\frac1{4\beta\kappa}, 
	\end{equation}
and thus $\kappa s<\frac1{4\beta}.$
By \eqref{eq:omega-derivatives}, we also have $ s\omega'<\frac1\beta$ and hence
	\begin{equation}\label{eq:profile-bounds}
 		\frac{\omega'}2<\omega'+\frac{s}{2}\omega''<\omega'. 
	\end{equation}

\vskip5pt
\noindent\textbf{Second-order differentiation.} 
Recall that the maximum value $m=Z(x,y)$ is positive. 
Introduce
	\begin{equation}\label{eq:Xi}
		 \mathcal{Q}(x,y)=B(x,y)-\frac{s}{2} \bigl[\omega(s)+\kappa(|x|^2+|y|^2)+m\bigr] =: B(x,y) - \Psi(x,y),
	\end{equation}
where
	\[ \Psi(x,y)=\frac{s}{2}  \bigl[\omega(s)+\kappa(|x|^2+|y|^2)+m\bigr], \]
and $m$ is held fixed when differentiating.
Since $\mathcal{Q}=\frac{s}{2}(Z-m)$, at the maximising pair $(x,y)$ we have
	\begin{equation}\label{eq:contact}
 		\mathcal{Q}=0,\qquad D\mathcal{Q}=0,\qquad D^2\mathcal{Q}\le0.
	\end{equation}

Direct differentiation of \eqref{Baom} gives
	\begin{equation}\label{eq:B-hessian}
 	\begin{aligned}
 		B_{x_ix_j}&=u_{ij}-su_{ij1},&
 		B_{x_iy_j}&=-u_{ij},&
		 B_{y_iy_j}&=u_{ij}(y).
 	\end{aligned}
	\end{equation}
By \eqref{eq:linearization}, we have $S_2^{ij}u_{ij1}=0$.

At the selected pair, \eqref{eq:coordinates} gives $s=y_1-x_1$, $s_{y_1}=1$ and $s_{y_1y_1}=0$. Thus 
	\[ \Psi_{y_1y_1} =\omega'+\frac{s}{2}\omega''+2\kappa y_1+\kappa s  =\omega'+\frac{s}{2}\omega''+2\kappa x_1+3\kappa s. \]
By \eqref{eq:contact}, \eqref{eq:Xi} and \eqref{eq:B-hessian}, we have 
	\[ 0 \geq \mathcal{Q}_{y_1y_1} = u_{11}(y) -\Psi_{y_1y_1},  \]
and therefore 
	\begin{equation}\label{eq:y-upper}
	\begin{split}
 		u_{11}(y) &\le\omega'+\frac{s}{2}\omega''+2\kappa x_1+3\kappa s  \\
				& \le\omega'+\frac{s}{2}\omega''+2\kappa+3\kappa s =:L.
	\end{split}
	\end{equation}
	
Thanks to \eqref{omyige} and \eqref{eq:profile-bounds}, we get
	\[ L=\omega'+\frac{s}{2}\omega''+2\kappa+3\kappa s>0 \]
and
	\[ L+1<\omega'+2\kappa+\frac34+1<2\omega'. \]

From \eqref{eq:curvature-lower}, $u_{11}(x)>0$. Applying Lemma~\ref{lem:directional} to $D^2u(x)$ and $D^2u(y)$, we obtain
\begin{equation}\label{eq:endpoint}
 S_2^{ij}u_{ij}(y)\ge\frac{u_{11}}{L+1}
                         \ge\frac{u_{11}}{2\omega'}.
\end{equation}

From \eqref{eq:contact}, $\operatorname{Tr}(\mathcal{W}D^2\mathcal{Q}(x,y))\leq 0$ for any nonnegative definite $2n\times 2n$ matrix $\mathcal{W}$.
We shall derive a contradiction by obtaining $\operatorname{Tr}(\mathcal{W}D^2\mathcal{Q}(x,y))> 0$ for suitable choices of $\mathcal{W}\geq0$. 
Unlike the proof of Theorem~\ref{thm1}, here we use two complementary choices of $\mathcal{W}$ and then distinguish two cases according to the value of $u_{11}$.

\subsection{The reflected variation}
The $n\times n$ matrix
	\[ R=I-2e_1\otimes e_1=\text{diag}(-1,1,\cdots,1)\]
denotes reflection across the hyperplane perpendicular to $e_1$. 
Denote $S_2'=(S_2^{ij})$, which is positive definite shown in \eqref{eq:directional-ellipticity}. 
In this subsection, we shall indeed choose 
	\[ \mathcal{W}=\begin{pmatrix}S_2'&S_2'R\\RS_2'&RS_2'R\end{pmatrix} =\begin{pmatrix}I\\R\end{pmatrix}S_2'\begin{pmatrix}I&R\end{pmatrix}. \]

To be specific, define the operator 
	\[ \mathcal D_i=\partial_{x_i}+\varepsilon_i\partial_{y_i}, \quad  i=1,2,\ldots,n, \]
where $\varepsilon_1=-1$ and $\varepsilon_i=1$ for $2\leq i\leq n$. 
For any $z\in\mathbb{R}^n$, let
	\[ Q(z)=\mathcal{Q}(x+z, y+Rz). \]
Because $\mathcal{Q}$ has a local maximum at $(x,y)$, $Q$ has a local maximum at $z=0$.
Therefore, 
	\[ D^2_z Q(0) \leq 0. \]
By the chain rule $Q_{ij}(0)=\mathcal D_i\mathcal D_j\mathcal{Q}(x,y)$.
Since $(S_2^{ij})$ is positive definite, we have
	\begin{equation}\label{eq:reflected-test}
 		S_2^{ij}\mathcal D_i\mathcal D_j\mathcal{Q}\le0.
	\end{equation}

At $(x,y)$, direct computation gives 
	\[ \mathcal D_i s=-2\delta_{i1},\qquad \mathcal D_i\mathcal D_j s=0. \]
Also,
	\[ \mathcal D_i(|x|^2+|y|^2)=2(x_i+\varepsilon_i y_i), \qquad \mathcal D_i\mathcal D_j(|x|^2+|y|^2)=4\delta_{ij}. \]
Since $x_1-y_1=-s$ and $x_i+y_i=2x_i$ for $i\ge2$, we obtain
	\begin{align}\label{eq:penalty-reflection}
 		\mathcal D_i\mathcal D_j\Psi ={}&4\left(\omega'+\frac{s}{2}\omega''+\kappa s\right) \delta_{i1}\delta_{j1}+2\kappa s\delta_{ij}\notag\\
 								&-4\kappa\sum_{\alpha=2}^n x_\alpha (\delta_{i1}\delta_{j\alpha}+\delta_{i\alpha}\delta_{j1}).
	\end{align}
	
For $\alpha\ge2$, from \eqref{eq:first-order} and \eqref{eq:S2formula} we have
	\begin{equation}\label{eq:off-diagonal}
 		u_{1\alpha}=-\kappa x_\alpha, \qquad S_2^{1\alpha}=\kappa x_\alpha.
	\end{equation}
Contracting~\eqref{eq:penalty-reflection} with $S_2^{ij}$, we obtain
	\begin{equation}\label{eq:contracted-penalty}
 		S_2^{ij}\mathcal D_i\mathcal D_j\Psi =4\left(\omega'+\frac{s}{2}\omega''+\kappa s\right)S_2^{11} -8\kappa^2\sum_{\alpha=2}^n x_\alpha^2  +2\kappa s\sum_iS_2^{ii}.
	\end{equation}

On the other hand, by~\eqref{eq:B-hessian},
	\[ \mathcal D_i\mathcal D_j B =(1-\varepsilon_i-\varepsilon_j)u_{ij} -su_{ij1}+\varepsilon_i\varepsilon_j u_{ij}(y). \]
Using~\eqref{eq:linearization} and~\eqref{eq:off-diagonal}, we obtain
	\begin{align}\label{eq:contracted-B}
 		S_2^{ij}\mathcal D_i\mathcal D_jB &={}\varepsilon_i\varepsilon_jS_2^{ij}u_{ij}(y) -2+4\sum_iS_2^{i1}u_{i1}\notag\\
 								&={}\varepsilon_i\varepsilon_jS_2^{ij}u_{ij}(y) -2+4u_{11}S_2^{11} -4\kappa^2\sum_{\alpha=2}^n x_\alpha^2.
	\end{align}

Substituting \eqref{eq:contracted-penalty} and \eqref{eq:contracted-B} into \eqref{eq:reflected-test}, as $\mathcal{Q}=B-\Psi$, we have
	\begin{align}\label{eq:exact-reflection}
 	0\geq S_2^{ij}\mathcal D_i\mathcal D_j\mathcal{Q} ={}&\varepsilon_i\varepsilon_jS_2^{ij}u_{ij}(y)-2 +4\left(u_{11}-\omega'-\frac{s}{2}\omega''-\kappa s\right)S_2^{11}\notag\\
 				&+4\kappa^2\sum_{\alpha=2}^n x_\alpha^2    -2\kappa s\sum_iS_2^{ii}.
	\end{align}
Note that the matrix $(\varepsilon_i\varepsilon_j u_{ij})=R^TD^2u(x)R$ is an
orthogonal conjugate of $D^2u(x)$. Its eigenvalues lie in $\Gamma_2$,
satisfying $\sigma_2(\lambda(\varepsilon_i\varepsilon_j u_{ij}))=1$, and its linearised coefficients are
$\varepsilon_i\varepsilon_jS_2^{ij}$. Thus~\eqref{eq:mixed} gives
	\[ \varepsilon_i\varepsilon_jS_2^{ij}u_{ij}(y)\ge2.\]
Dropping the nonnegative terms in~\eqref{eq:exact-reflection}, we hence obtain the relation 
	\begin{equation}\label{eq:R}\tag{R}
 		0\ge4\left(u_{11}-\omega'-\frac{s}{2}\omega''-\kappa s\right) S_2^{11} -2\kappa s\sum_i S_2^{ii}.
	\end{equation}
Here $S_2^{11}=\Delta u-u_{11}>0$. 
Note that the above computation does not require $(S_2^{ij})$ to be diagonal or invariant under reflection.

\subsection{The translation variation}
In this subsection, we shall instead consider 
	\[ \mathcal{W}=\begin{pmatrix}S_2'&S_2' \\ S_2'& S_2' \end{pmatrix} \geq 0. \]
Specifically, define the operator $\mathcal T_i=\partial_{x_i}+\partial_{y_i}$. Similarly as in \eqref{eq:reflected-test} we have
	\[ S_2^{ij}\mathcal T_i\mathcal T_j\mathcal{Q}\le0. \]
The translation operator $\mathcal T_i$ leaves $s$ unchanged. Hence
	\[ \mathcal T_i\mathcal T_j B=u_{ij}(y)-u_{ij}-su_{ij1}, \qquad \mathcal T_i\mathcal T_j\Psi=2\kappa s\delta_{ij}. \]
By \eqref{eq:linearization}, we obtain the relation 
	\begin{equation}\label{eq:T}\tag{T}
 		0\geq S_2^{ij}\mathcal T_i\mathcal T_j\mathcal{Q} = S_2^{ij}u_{ij}(y)-2-2\kappa s\sum_i S_2^{ii}.
	\end{equation}

To derive contradiction, we now distinguish two cases.

\subsection{\texorpdfstring{Case 1: $u_{11}\le8\omega'$}{Case 1: comparable curvature}}
We first show that
	\begin{equation}\label{eq:curvature-trace}
		u_{11}^2S_2^{11}\ge\frac13\Delta u.
	\end{equation}
Recall that $S_2[u]=\sum_{1\leq i<j\leq n}(u_{ii}u_{jj}-u_{ij}^2)$ (see, for example, \cite{Wa09}). 
From the equation $S_2[u]=1$, we have
	\begin{align} \label{newpin}
 		1=S_2[u] &=u_{11}S_2^{11} +\sum_{2\le i<j\le n}(u_{ii}u_{jj}-u_{ij}^2) -\sum_{i=2}^n u_{1i}^2 \nonumber\\
 			&\le u_{11}S_2^{11}+\frac12(S_2^{11})^2.
	\end{align}
By \eqref{eq:curvature-lower} and \eqref{omyige}, $u_{11} > 20$. If $S_2^{11}\ge u_{11}$, then
	\[ \frac{u_{11}^2S_2^{11}}{\Delta u} =\frac{u_{11}^2S_2^{11}}{u_{11}+S_2^{11}}  \ge\frac{u_{11}^2}{2}\ge 200. \]
If $S_2^{11}<u_{11}$, inequality \eqref{newpin} implies
$S_2^{11}\ge2/(3u_{11})$, while $\Delta u<2u_{11}$.
This proves~\eqref{eq:curvature-trace}.
Consequently,
	\begin{equation}\label{eq:slope-trace}
 		(\omega')^2S_2^{11}\ge\frac{\Delta u}{192}.
	\end{equation}

By~\eqref{eq:omega-derivatives} and~\eqref{eq:curvature-lower},
	\[ u_{11}-\omega'-\frac{s}{2}\omega''-\kappa s >\frac{s}{2}\bigl[\beta(\omega')^2-\kappa\bigr]. \]
Using~\eqref{eq:slope-trace}, $S_2^{11}\le\Delta u$, and $\sum_iS_2^{ii}=(n-1)\Delta u$, by the choice of constants in \eqref{ppcons}, we have the right-hand side of~\eqref{eq:R} is strictly greater than
	\begin{align*}
 		2s\bigl[\beta(\omega')^2-\kappa\bigr]S_2^{11} -2\kappa s\sum_i S_2^{ii} &\ge2s\left(\frac{\beta}{192}-\kappa n\right)\Delta u\\
 									&=2\kappa n s\Delta u>0.
	\end{align*}
This contradicts~\eqref{eq:R}.

\subsection{\texorpdfstring{Case 2: $u_{11}>8\omega'$}{Case 2: larger curvature}}
By \eqref{omyige}, $\omega'>4\kappa>\kappa s$, since $s<2$. 
Then by \eqref{eq:profile-bounds}, we have
	\[ u_{11}-\omega'-\frac{s}{2}\omega''-\kappa s >u_{11}-2\omega'>\frac{u_{11}}2. \]
It follows from \eqref{eq:R} and $\sum_iS_2^{ii}=(n-1)\Delta u$ that
	\begin{equation*} 
		 \frac{S_2^{11}}{\Delta u}  \le\frac{\kappa s(n-1)}{u_{11}}.
 	\end{equation*}
Recall that $u_{11} > 20$ by \eqref{eq:curvature-lower} and \eqref{omyige}. 
From \eqref{omyige} and the choice of constants \eqref{ppcons}, 
	\[ \kappa s < \frac{1}{4\beta} = \frac{1}{1536\kappa n}. \]	
Therefore, 
	\begin{equation}\label{eq:trace-control}
		 \frac{S_2^{11}}{\Delta u}  \le\frac{\kappa s(n-1)}{u_{11}} < \frac12, \qquad\text{and hence}\qquad \Delta u\le2u_{11}.
 	\end{equation}

Combining~\eqref{eq:endpoint} and~\eqref{eq:trace-control}, the
right-hand side of~\eqref{eq:T} is bounded below by
	\begin{align*}
		 \frac{u_{11}}{2\omega'}-2-4\kappa s(n-1)u_{11} &=\frac{u_{11}}{2\omega'} \bigl[1-8\kappa(n-1)s\omega'\bigr]-2 \\
 						&>\frac34\frac{u_{11}}{2\omega'}-2>1,
	\end{align*}
since
	\[ 8\kappa(n-1)s\omega'<\frac{8\kappa(n-1)}{\beta}<\frac14 \]
and $u_{11}>8\omega'$. This contradicts \eqref{eq:T}.

Both cases lead to a contradiction. Hence $Z$ cannot attain a positive maximum, and \eqref{keyZve} follows: $Z\le0$ in $B_1(0)\times B_1(0)$.
The argument at the beginning of this section, followed by application on interior balls, completes the proof of Theorem~\ref{thm2}.
\end{proof}


\appendix
\section{Some matrix computations}

The following two elementary linear algebra lemmas are used in Section 2.
For any $n\times n$ real matrix $K$, denote its norm by
	$$\|K\| = \sup_{v\in\mathbb{S}^{n-1}} |Kv|.$$

\begin{lemma}\label{lem-matrix-contraction}
Let $A,B$ be $n \times n$ symmetric positive definite matrices.
Then
	\begin{equation}\label{eq-matrix-contraction}
    		\left\|  2A^{1/2}(A+B)^{-1}B^{1/2}  \right\|  \le 1.
	\end{equation} 
\end{lemma}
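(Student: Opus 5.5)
To prove \eqref{eq-matrix-contraction}, we would show the equivalent quadratic form inequality: for all $v,w\in\mathbb R^n$,
\[
2\,\bigl|\langle A^{1/2}(A+B)^{-1}B^{1/2}w, v\rangle\bigr| \le |v|\,|w|.
\]
Setting $a:=A^{1/2}v$ and $b:=B^{1/2}w$, with $M:=(A+B)^{-1}$ symmetric positive definite, the left side equals $2|\langle Ma,b\rangle| = 2|\langle M^{1/2}a, M^{1/2}b\rangle|$. By Cauchy--Schwarz in the $M$-inner product this is bounded by $2\sqrt{\langle Ma,a\rangle}\sqrt{\langle Mb,b\rangle}$. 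It therefore suffices to prove the two bounds
\[
\langle (A+B)^{-1}A^{1/2}v, A^{1/2}v\rangle \le |v|^2,\qquad \langle (A+B)^{-1}B^{1/2}w,B^{1/2}w\rangle \le |w|^2,
\]
and then to sharpen the resulting factor $2$ to $1$.

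The factor $2$ is the obstacle, so a better route is to use both pieces jointly. We would note that
\[
A^{1/2}(A+B)^{-1}A^{1/2} + B^{1/2}(A+B)^{-1}B^{1/2}
\]
is not obviously equal to $I$. Instead, we would use the block matrix
\[
\mathcal M=\begin{pmatrix} A^{1/2}\\ B^{1/2}\end{pmatrix}(A+B)^{-1}\begin{pmatrix}A^{1/2} & B^{1/2}\end{pmatrix}.
\]
This is the orthogonal projection $P = X(X^TX)^{-1}X^T$ onto the range of $X=\begin{pmatrix}A^{1/2}\\ B^{1/2}\end{pmatrix}$, since $X^TX=A+B$. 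Hence $0\le \mathcal M\le I_{2n}$. Testing $\mathcal M\le I$ on the vector $(v,\pm w)$ gives
\[
\langle \alpha v,v\rangle+\langle \beta w,w\rangle \pm 2\langle Nw,v\rangle \le |v|^2+|w|^2,
\]
where $\alpha = A^{1/2}(A+B)^{-1}A^{1/2}$, $\beta=B^{1/2}(A+B)^{-1}B^{1/2}$ and $N = A^{1/2}(A+B)^{-1}B^{1/2}$.

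The remaining step is to control the diagonal terms. The identity $I-P$ restricted to the diagonal blocks gives $I-\alpha\ge0$ and $I-\beta\ge0$. A cleaner approach is to apply the projection property to $I-P$, which is also positive semidefinite and has off-diagonal block $-N$. Adding the inequalities from $P\ge0$ tested on $(v,w)$ and from $I-P\ge0$ tested on $(v,-w)$ gives
\[
0\le \langle \alpha v,v\rangle + \langle \beta w,w\rangle + 2\langle Nw,v\rangle + |v|^2 - \langle \alpha v,v\rangle + |w|^2 - \langle\beta w,w\rangle + 2\langle Nw,v\rangle.
\]
The $\alpha,\beta$ terms cancel, leaving $-4\langle Nw,v\rangle \le |v|^2+|w|^2$. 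Replacing $w$ by $-w$ and then scaling $v\mapsto tv$, $w\mapsto w/t$ with optimal $t$ yields $4|\langle Nw,v\rangle|\le 2|v||w|$. This is exactly $\|2N\|\le 1$, which is \eqref{eq-matrix-contraction}. The main point to verify is that $P$ is indeed a projection ($P^2=P$, $P^T=P$), which is immediate from $X^TX=A+B$.
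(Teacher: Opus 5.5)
Your proof is correct, and it takes a different route from the paper's. Write $N=A^{1/2}(A+B)^{-1}B^{1/2}$. The paper argues by direct computation. It rewrites $\|2N\|\le1$ as $4NN^T\le I$, and after a congruence by $A^{-1/2}(A+B)$ this becomes the matrix AM--GM inequality $4B\le (A+B)A^{-1}(A+B)$. That inequality follows from the identity $(A+B)A^{-1}(A+B)-4B=(A-B)A^{-1}(A-B)\ge0$.

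You instead realise $N$ as the off-diagonal block of the orthogonal projection $P=X(X^TX)^{-1}X^T$, using $X^TX=A+B$. Adding $P\ge0$ tested on $(v,w)$ to $I-P\ge0$ tested on $(v,-w)$ amounts to the following. With $J=\operatorname{diag}(I,-I)$, the matrix $P-JPJ=\begin{pmatrix}0&2N\\2N^T&0\end{pmatrix}$ is a difference of two orthogonal projections, so $-I\le P-JPJ\le I$, which is exactly $\|2N\|\le1$. Your sign bookkeeping in the sum and the final optimisation over $t$ are both right.

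Each route has its advantage. The paper's is a two-line calculation that makes the defect explicit: $I-4NN^T$ is congruent to $(A-B)A^{-1}(A-B)$, so the bound is strict unless $A-B$ is singular. Yours is more structural. It never inverts $A$ alone, so it works verbatim for positive semidefinite $A,B$ with $A+B$ invertible. It also extends to several matrices, giving $\|2A_i^{1/2}(\sum_k A_k)^{-1}A_j^{1/2}\|\le1$ for $i\ne j$ by flipping the sign of one block.

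In a final write-up, drop the abandoned Cauchy--Schwarz attempt and the remarks about testing $\mathcal M\le I$ on $(v,\pm w)$. The argument you actually complete does not use them.
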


\begin{proof}
It suffices to prove
	$$\left( 2A^{1/2}(A+B)^{-1}B^{1/2} \right) \left( 2A^{1/2}(A+B)^{-1}B^{1/2} \right)^T \le I,$$
where $I$ is the $n\times n$ identity matrix, or equivalently,
	$$4A^{1/2}(A+B)^{-1} B (A+B)^{-1}A^{1/2} \le I.$$
By a simple computation, it is equivalent to
	\begin{align}\label{eq-matrix-AM-GM-inequality-equiv-form}
		4B \le (A+B) A^{-1} (A+B).
	\end{align}
Note that
	\begin{align*}
		(A+B)A^{-1}(A+B)-4B &= A+2B+BA^{-1}B-4B\\
			&= A-2B+BA^{-1}B\\
			&= (A-B)A^{-1}(A-B) \ge0.
	\end{align*}
This proves \eqref{eq-matrix-AM-GM-inequality-equiv-form} and hence \eqref{eq-matrix-contraction}.
\end{proof}

\begin{lemma}\label{lem-positive-block-matrix}
Let $A,B$ be $n \times n$ symmetric positive definite
matrices, and let
	\[ C=A^{1/2}KB^{1/2}, \]
where $K$ is an $n\times n$ matrix such that $\|K\|\le 1$.
Then, the $2n\times 2n$ symmetric matrix
	\[ W= \begin{pmatrix}
        A & C\\
        C^T & B
    	\end{pmatrix}\]
is positive semi-definite.
\end{lemma}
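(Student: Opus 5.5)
The plan is to reduce positive semi-definiteness of $W$ to an explicit factorization. Conjugate $W$ by the block-diagonal matrix $\operatorname{diag}(A^{-1/2},B^{-1/2})$, which is symmetric and invertible. This gives
\[
\begin{pmatrix} A^{-1/2} & 0\\ 0 & B^{-1/2}\end{pmatrix} W \begin{pmatrix} A^{-1/2} & 0\\ 0 & B^{-1/2}\end{pmatrix}
=\begin{pmatrix} I & K\\ K^T & I\end{pmatrix},
\]
because $A^{-1/2}CB^{-1/2}=K$ and $B^{-1/2}C^TA^{-1/2}=K^T$. Congruence by an invertible matrix preserves semi-definiteness (Sylvester's law of inertia, or directly $\langle W\xi,\xi\rangle=\langle \widetilde W\eta,\eta\rangle$ with $\xi=D\eta$). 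It therefore suffices to show that $\widetilde W:=\begin{pmatrix} I & K\\ K^T & I\end{pmatrix}\ge0$.

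For $\widetilde W$ the quadratic form is evaluated directly. For $(a,b)\in\mathbb R^n\times\mathbb R^n$,
\[
\langle \widetilde W(a,b),(a,b)\rangle = |a|^2+2\langle a,Kb\rangle+|b|^2 \ge |a|^2-2|a|\,|Kb|+|b|^2\ge |a|^2-2|a|\,|b|+|b|^2=(|a|-|b|)^2\ge0.
\]
The first inequality is Cauchy--Schwarz, and the second uses $\|K\|\le1$. Equivalently, one can take the Schur complement $I-K^TK\ge0$, which is the same condition as $\|K\|\le1$.

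Transferring back through the congruence gives $W\ge0$. There is no real obstacle here. The only point needing care is that $A^{\pm1/2}$ and $B^{\pm1/2}$ are well defined and symmetric, which holds because $A$ and $B$ are symmetric positive definite. We also use that the off-diagonal block of $W$ is $C^T=B^{1/2}K^TA^{1/2}$, which follows from the symmetry of the square roots.
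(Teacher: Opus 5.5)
Your proof is correct. The paper instead completes the square in $\xi$ directly on $W$ and reduces to the Schur complement condition $B-C^TA^{-1}C=B^{1/2}(I-K^TK)B^{1/2}\ge0$. That is just the congruent image of your normalized condition $I-K^TK\ge0$, which your Cauchy--Schwarz bound encodes, so the two arguments are essentially the same.
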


\begin{proof}
For any $\xi,\eta\in\mathbb R^n$, we have
	\begin{align*}
	\begin{pmatrix}
    		\xi \\ \eta
	\end{pmatrix}^T
	W
	\begin{pmatrix}
    	\xi \\ \eta
	\end{pmatrix}
=
	\xi^TA\xi + 2\xi^TC\eta + \eta^TB\eta.
	\end{align*}
Completing the square in $\xi$, we obtain
	\begin{align*}
		\xi^TA\xi +2\xi^TC\eta +\eta^TB\eta = \bigl(\xi+A^{-1}C\eta\bigr)^T A \bigl(\xi+A^{-1}C\eta\bigr) + \eta^T \bigl(B-C^TA^{-1}C\bigr)\eta.
	\end{align*}
Thus, $W\ge0$ provided
	\[ B-C^TA^{-1}C\ge0. \]

Using $C=A^{1/2}KB^{1/2},$ we compute
	\begin{align*}
    		C^TA^{-1}C &= B^{1/2}K^TA^{1/2} A^{-1} A^{1/2}KB^{1/2}\\
    				&= B^{1/2}K^TKB^{1/2}.
	\end{align*}
Hence
	\[ B-C^TA^{-1}C = B^{1/2} \bigl(I-K^TK\bigr) B^{1/2}. \]
Since $\|K\|\le1,$ we have $K^TK\le I$, and therefore
	\[ B-C^TA^{-1}C\ge0. \]
Consequently, $W\ge0$ is proved. 
\end{proof}

\section{A directional comparison}\label{app:comparison}

Let $r=(r_{ij})$ be a symmetric matrix. Denote
	\begin{equation}\label{eq:Sk22}
 		S_2(r)=\sigma_2(\lambda(r)), \qquad  S_2^{ij}(r)=\frac{\partial S_2(r)}{\partial r_{ij}},
	\end{equation}
where $\lambda(r)=(\lambda_1,\cdots,\lambda_n)$ are eigenvalues of $r$.  
We also recall the mixed inequality
	\begin{equation}\label{eq:mixed}
 		S_2^{ij}(r)q_{ij}\ge2\sqrt{S_2(r)S_2(q)} \qquad\text{if }\lambda(r),\lambda(q)\in\Gamma_2,
	\end{equation}
where $q=(q_{ij})$ is also symmetric.
Indeed, by the concavity of $S_2^{1/2}$ (see Wang \cite[\S2.3]{Wa09}),
	\[ \sqrt{S_2(q)}\le\sqrt{S_2(r)} +\frac{S_2^{ij}(r)(q_{ij}-r_{ij})}{2\sqrt{S_2(r)}}. \]
Using $S_2^{ij}(r)r_{ij}=2S_2(r)$ gives \eqref{eq:mixed}.
In particular, if $S_2(r)=S_2(q)=1$, then $S_2^{ij}(r)q_{ij}\ge2$.

\begin{lemma}\label{lem:directional}
Let $r=(r_{ij})$ and $q=(q_{ij})$ be symmetric matrices satisfying
	\[ \lambda(r),\lambda(q)\in\Gamma_2,\qquad S_2(r)=S_2(q)=1. \]
Suppose that, for a unit vector $\xi$ and a constant $L>0$,
	\[ r_{\xi\xi}:=r_{ij}\xi_i\xi_j>0, \qquad q_{\xi\xi}:=q_{ij}\xi_i\xi_j\le L. \]
Then
	\begin{equation}\label{eq:directional-comparison}
		 S_2^{ij}(r)q_{ij}\ge\frac{r_{\xi\xi}}{L+1}.
	\end{equation}
\end{lemma}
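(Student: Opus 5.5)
Write $\ell(p):=S_2^{ij}(q)\,p_{ij}$ for symmetric $p$. The plan is to exploit three facts: the quantity to bound is symmetric in $r$ and $q$, $\ell$ is nonnegative on the closed cone $\overline{\Gamma_2}$, and $q_{\xi\xi}\le L$ gives a lower bound on one entry of $(S_2^{ij}(q))$.

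\textbf{Setup.} After a rotation we may take $\xi=e_1$. By \eqref{eq:S2formula}, $S_2^{ij}(r)q_{ij}=\tr r\,\tr q-\tr(rq)$, which is symmetric in $r$ and $q$. Hence
\[
S_2^{ij}(r)q_{ij}=S_2^{ij}(q)\,r_{ij}=\ell(r).
\]
By the mixed inequality \eqref{eq:mixed}, $\ell(p)\ge 2\sqrt{S_2(q)S_2(p)}\ge0$ for every $p$ with $\lambda(p)\in\overline{\Gamma_2}$, by continuity up to the boundary. So $\ell$ is a nonnegative linear functional on $\overline{\Gamma_2}$.

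\textbf{Step 1: a lower bound for $S_2^{11}(q)$ from $q_{11}\le L$.} Expanding $S_2(q)=1$ along the first row, exactly as in \eqref{newpin}, gives
\[
1\le q_{11}S_2^{11}(q)+\tfrac12\bigl(S_2^{11}(q)\bigr)^2\le L\,S_2^{11}(q)+\tfrac12\bigl(S_2^{11}(q)\bigr)^2 .
\]
If $S_2^{11}(q)\ge1$ we are done. Otherwise the right-hand side is at most $(L+\tfrac12)S_2^{11}(q)$. In either case $S_2^{11}(q)\ge 1/(L+1)$.

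\textbf{Step 2: peeling $e_1\otimes e_1$ off $r$.} Consider $r_\tau:=r-\tau\, e_1\otimes e_1$ for $\tau\ge0$. Since $S_2$ is affine along this line,
\[
S_2(r_\tau)=1-\tau S_2^{11}(r),
\]
and $\tr r_\tau=\tr r-\tau$. Let $\tau^*=1/S_2^{11}(r)$, which is finite and positive because $S_2^{11}(r)>0$ by \eqref{eq:directional-ellipticity}. For $\tau<\tau^*$ we have $S_2(r_\tau)>0$, and the connected path starting in $\Gamma_2$ cannot leave $\{\sigma_1>0,\sigma_2>0\}$ without $\sigma_2$ vanishing. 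Hence $r_\tau\in\overline{\Gamma_2}$ for $0\le\tau\le\tau^*$. Linearity of $\ell$ and positivity on the cone then give
\[
\ell(r)=\ell(r_\tau)+\tau S_2^{11}(q)\ge \tau\, S_2^{11}(q),\qquad 0\le\tau\le\tau^* .
\]
\begin{itemize}
\item If $r_{11}\le\tau^*$, take $\tau=r_{11}$. Step 1 then gives $\ell(r)\ge r_{11}/(L+1)$, as required.
\item If $r_{11}\le 2(L+1)$, the crude bound $\ell(r)\ge2$ from \eqref{eq:mixed} already gives \eqref{eq:directional-comparison}.
\end{itemize}

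\textbf{Main obstacle: $r_{11}>\tau^*$ and $r_{11}>2(L+1)$.} Here we only have
\[
\ell(r)\ge \max\Bigl\{\,2,\ \frac{S_2^{11}(q)}{S_2^{11}(r)}\,\Bigr\}.
\]
To close this case I would use the sharper concavity bound along the segment, $\ell(r_\tau)\ge2\sqrt{1-\tau S_2^{11}(r)}$, which yields
\[
\ell(r)\ge \tau S_2^{11}(q)+2\sqrt{1-\tau S_2^{11}(r)},
\]
and then optimize over $\tau\in[0,\tau^*]$. This must be combined with an upper bound on $S_2^{11}(r)$ in terms of $r_{11}$. For that I would expand $1=r_{11}S_2^{11}(r)+\sigma_2(r'')-\sum_{i\ge2}r_{1i}^2$, where $r''$ is the lower $(n-1)\times(n-1)$ block, and use Newton's inequality $\sigma_2(r'')\le\frac{n-2}{2(n-1)}(\tr r'')^2$. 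The aim is to show $r_{11}S_2^{11}(r)$ is controlled, i.e.\ $\tau^*\gtrsim r_{11}$ up to a factor absorbed into $L+1$.

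The off-diagonal terms $r_{1i}^2$ enter with the unfavourable sign. If they obstruct this, the fallback is to run the same peeling argument with the roles of $r$ and $q$ exchanged, using the symmetry of $\ell$: push $q$ toward the boundary along $+e_1\otimes e_1$ and use $q_{11}\le L$ directly. Balancing the two endpoint estimates should produce the factor $r_{11}/(L+1)$.
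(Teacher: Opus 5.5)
\textbf{There is a genuine gap.} Your Step 1 is correct, and so are the segment argument and the two easy subcases of Step 2. The case you call the main obstacle, however, is where all the content of the lemma lies, and the tools you propose cannot reach it.

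First, the row expansion gives $r_{11}S_2^{11}(r)=1-\sigma_2(r'')+\sum_{i\ge2}r_{1i}^2$. Newton's inequality is an upper bound on $\sigma_2(r'')$, so it only bounds $r_{11}S_2^{11}(r)$ from below. In fact no upper bound exists, even for diagonal $r$, because $r''$ need not lie in $\Gamma_2$. For example, take $r=\mathrm{diag}(a,a,-b)$ with $b=(a^2-1)/(2a)$, padded with zeros if $n>3$. Then $S_2(r)=1$, $\tr r>0$, $r_{11}=a$, and $S_2^{11}(r)=a-b\approx a/2$.

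Second, shifting $r$ and/or $q$ along $N=e_1\otimes e_1$ only ever produces the pairings $S_2^{11}(r)$ and $S_2^{11}(q)$, and $S_2(N)=0$. Write $\rho=S_2^{11}(q)/S_2^{11}(r)$. Applying \eqref{eq:mixed} to $r+\alpha N$ and $q+\beta N$ gives the lower bound $\rho+\rho^{-1}-\bigl(\sqrt{\rho X}-\sqrt{Y/\rho}\bigr)^2$, where $X=1+\alpha S_2^{11}(r)$ and $Y=1+\beta S_2^{11}(q)$. So the best possible is $\rho+\rho^{-1}$. Your optimisation over $\tau$ is the case $\beta=0$, and exchanging $r$ and $q$ is the case $\alpha=0$.

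Now take $q=\mathrm{diag}(0,t,1/t)$ with $t+1/t=a-b$. Then $q_{11}=0\le L$ and $\rho=1$, so every estimate of this type yields only $2$. The lemma, however, asserts a bound of $a/(L+1)\to\infty$; the true value is $(a-b)t+2a/t\approx a^2/4$. No balancing of these endpoint estimates can close the case.

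\textbf{The missing idea.} You need a shift direction whose pairing with $r$ returns $r_{11}$ itself, rather than $S_2^{11}(r)=\tr r-r_{11}$. Since $S_2^{ij}(p)M_{ij}=\tr p\,\tr M-\tr(pM)$, the matrix $E=\frac{1}{n-1}I-e_1\otimes e_1$ satisfies $S_2^{ij}(p)E_{ij}=p_{11}$ for every $p$. Unlike $N$, it also has $2S_2(E)=-\theta<0$, where $\theta=\frac{n-2}{n-1}$ and $n\ge3$.

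This is what the paper uses. Set $\hat r=r+\frac{r_{11}}{\theta}E$ and $\hat q=q+\frac{q_{11}}{\theta}E$. Then $S_2(\hat r)=1+\frac{r_{11}^2}{2\theta}$, $S_2(\hat q)=1+\frac{q_{11}^2}{2\theta}$, and $S_2^{ij}(\hat r)\hat q_{ij}=S_2^{ij}(r)q_{ij}+\frac{r_{11}q_{11}}{\theta}$. Both shifted matrices stay in $\Gamma_2$, since their traces are $\tr r+\frac{r_{11}}{n-2}$ and $\tr q+\frac{q_{11}}{n-2}$, with $\tr q>|q_{11}|$.

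Applying \eqref{eq:mixed} to $\hat r,\hat q$ then gives, in one stroke and with no case split or Step 1,
\[
S_2^{ij}(r)q_{ij}\ge\theta^{-1}\bigl[\sqrt{(r_{11}^2+2\theta)(q_{11}^2+2\theta)}-r_{11}q_{11}\bigr]\ge\frac{2r_{11}}{\sqrt{q_{11}^2+2\theta}+q_{11}}\ge\frac{r_{11}}{L+1}.
\]
Because $E$ has $S_2(E)<0$ rather than $S_2(E)=0$, the shift raises $S_2$ quadratically in $r_{11}$, and that is what lets $r_{11}$ survive into the bound. For $n=2$ one has $\theta=0$, so this fails; there the paper computes directly from $\det r=\det q=1$ to get $S_2^{ij}(r)q_{ij}\ge r_{11}/q_{11}$.
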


\begin{proof}
By a rotation, we may assume that $\xi=e_1$.
First suppose $n\ge3$, and set
	\[ \theta=\frac{n-2}{n-1},\qquad  E=\frac{I}{n-1}-e_1\otimes e_1. \]
Direct calculation gives
	\begin{equation}\label{eq:E-identities}
		 S_2^{ij}(r)E_{ij}=r_{11},\qquad S_2^{ij}(q)E_{ij}=q_{11},\qquad 2S_2(E)=-\theta.
	\end{equation}
Define
	\[ \widehat r=r+\frac{r_{11}}{\theta}E, \qquad \widehat q=q+\frac{q_{11}}{\theta}E. \]
Since $S_2$ is quadratic and $S_2^{ij}$ is linear in the matrix entries,
\eqref{eq:E-identities} implies
	\begin{equation}\label{eq:hatted}
	 \begin{gathered}
 		S_2(\widehat r)=1+\frac{r_{11}^2}{2\theta},\qquad
 		S_2(\widehat q)=1+\frac{q_{11}^2}{2\theta},\\
 		S_2^{ij}(\widehat r)\widehat q_{ij}
      		 =S_2^{ij}(r)q_{ij}+\frac{r_{11}q_{11}}{\theta}.
	 \end{gathered}
	\end{equation}
Both $\widehat r$ and $\widehat q$ have eigenvalues in $\Gamma_2$.
Indeed, their $2$-Hessians are positive by \eqref{eq:hatted}, and
	\[ \tr\widehat r=\tr r+\frac{r_{11}}{n-2}>0, \qquad \tr\widehat q=\tr q+\frac{q_{11}}{n-2}>0. \]
For the second inequality, note that $\tr q>|q|\ge|q_{11}|$, since $\lambda(q)\in\Gamma_2$ and $S_2(q)=1$.
Here $|q|^2=\sum_{i,j}q_{ij}^2$.

Applying \eqref{eq:mixed} to $\widehat r$ and $\widehat q$ gives
	\begin{align*}
 		S_2^{ij}(r)q_{ij} &\ge\frac{\sqrt{(r_{11}^2+2\theta)(q_{11}^2+2\theta)} -r_{11}q_{11}}{\theta}\\
 				&\ge\frac{2r_{11}}{\sqrt{q_{11}^2+2\theta}+q_{11}}\\
				 &\ge\frac{2r_{11}}{\sqrt{L^2+2\theta}+L}	 \ge\frac{r_{11}}{L+1}.
\end{align*}
Here the second inequality uses $r_{11}>0$ and rationalisation.
The third follows from $q_{11}\le L$ and the fact that $a\mapsto\sqrt{a^2+2\theta}+a$ is positive and increasing on $\R$.
The last inequality follows from $\sqrt{L^2+2\theta}\le L+2$.
In particular, $q_{11}$ need not be positive.

For $n=2$, admissibility implies positive definiteness, and
$S_2(r)=S_2(q)=1$ means $\det r=\det q=1$. Thus
 	\[ r=\begin{pmatrix}r_{11}&r_{12}\\
 		r_{12}&(1+r_{12}^2)/r_{11}\end{pmatrix},\qquad
 		q=\begin{pmatrix}q_{11}&q_{12}\\
 		q_{12}&(1+q_{12}^2)/q_{11}\end{pmatrix},
	\]
where $r_{11},q_{11}>0$. Therefore
	\begin{align*}
 		S_2^{ij}(r)q_{ij} &=\frac{q_{11}}{r_{11}}+\frac{r_{11}}{q_{11}} +\left(r_{12}\sqrt{\frac{q_{11}}{r_{11}}}  -q_{12}\sqrt{\frac{r_{11}}{q_{11}}}\right)^2\\
 				&\ge\frac{r_{11}}{q_{11}}\ge\frac{r_{11}}L \ge\frac{r_{11}}{L+1}.
	\end{align*}
This completes the proof.
\end{proof}


\begin{thebibliography}{99}

\bibitem{An15}
B. Andrews,
\emph{Moduli of continuity, isoperimetric profiles, and multi-point estimates in geometric heat equations},
in \emph{Surveys in differential geometry 2014. Regularity and evolution of nonlinear equations},
Surv. Differ. Geom., vol.~19, Int. Press, Somerville, MA, 2015, pp.~1--47.

\bibitem{AC11} 
B. Andrews and J. Clutterbuck,
\emph{Proof of the fundamental gap conjecture},
J. Amer. Math. Soc. \textbf{24} (2011), 899--916.

\bibitem{AL15}
B. Andrews and H. Li,
\emph{Embedded constant mean curvature tori in the three-sphere},
J. Differential Geom. \textbf{99} (2015), 169--189. 

\bibitem{Br13} 
S. Brendle,
\emph{Embedded minimal tori in $S^3$ and the Lawson conjecture},
Acta Math. \textbf{211} (2013), 177--190. 

\bibitem{Br14}
S. Brendle,
\emph{Two-point functions and their applications in geometry},
Bull. Amer. Math. Soc. \textbf{51} (2014), 581--596.


\bibitem{CHO16}
C. Chen, F. Han, and Q. Ou,
\emph{The interior $C^2$ estimate for the Monge-Amp\`ere equation in dimension $n=2$},
Anal. PDE \textbf{9} (2016), 1419--1432. 

\bibitem{CW01}
K.-S. Chou and X.-J. Wang,
\emph{A variational theory of the Hessian equation},
Comm. Pure Appl. Math. \textbf{54} (2001), 1029--1064. 

\bibitem{CIL92}
M. G. Crandall, H. Ishii, and P.-L. Lions,
\emph{User's guide to viscosity solutions of second order partial differential equations},
Bull. Amer. Math. Soc. \textbf{27} (1992), 1--67. 

\bibitem{Fan26}
Z. Fan,
\emph{Hessian estimates for the $\sigma_2$ equation with variable right-hand side terms in dimension 4},
Adv. Math. \textbf{494} (2026), 110953.


\bibitem{GT01}
D.~Gilbarg and N.~Trudinger,
\emph{Elliptic Partial Differential Equations of Second Order},
Springer-Verlag, Berlin, 2001.  

\bibitem{GQ19}
P. Guan and G. Qiu,
\emph{Interior $C^2$ regularity of convex solutions to prescribing scalar curvature equations},
Duke Math. J. \textbf{168} (2019), 1641--1663. 


\bibitem{Ha92-1}
R. Hamilton,
\emph{An isoperimetric estimate for the Ricci flow on the two-sphere},
in \emph{Modern methods in complex analysis (Princeton, NJ, 1992)}, Ann. of Math. Stud., vol.~137,
Princeton Univ. Press, Princeton, NJ, 1995, pp.~191--200.

\bibitem{Ha92-2}
R. Hamilton,
\emph{Isoperimetric estimates for the curve shrinking flow in the plane},
in \emph{Modern methods in complex analysis (Princeton, NJ, 1992)}, Ann. of Math. Stud., vol.~137,
Princeton Univ. Press, Princeton, NJ, 1995, pp.~201--222.

\bibitem{He59}
E. Heinz,
\emph{On elliptic Monge-Amp\`ere equations and Weyl's embedding problem},
J. Analyse Math. \textbf{7} (1959), 1--52. 

\bibitem{Hu98}
G. Huisken,
\emph{A distance comparison principle for evolving curves},
Asian J. Math. \textbf{2} (1998), 127--133. 

\bibitem{JT1}
F. Jiang, N.S. Trudinger,
Oblique boundary value problems for augmented Hessian equations I,
\emph{Bull. Math. Sci.} \textbf{8} (2018), 353--411.

\bibitem{JT2}
F. Jiang, N.S. Trudinger,
Oblique boundary value problems for augmented Hessian equations III,
\emph{Comm. Partial Differential Equations}, \textbf{44}, (2019), 708--748.

\bibitem{LWarX}
Z. Li and K. Wu,
\emph{Interior Hessian estimates for the quadratic Hessian equation},
arXiv:2608.23233v2, 2026.

\bibitem{Liu21}
J. Liu,
\emph{Interior $C^2$ estimate for Monge-Amp\`ere equation in dimension two},
Proc. Amer. Math. Soc. \textbf{149} (2021), 2479--2486. 

\bibitem{LW15}
J. Liu and X.-J. Wang,
\emph{Interior a priori estimates for the Monge-Amp\`ere equation},
in \emph{Surveys in differential geometry 2014. Regularity and evolution of nonlinear equations},
Surv. Differ. Geom., vol.~19, Int. Press, Somerville, MA, 2015, pp.~151--177.

\bibitem{LT10}
J. Liu and N. Trudinger,
\emph{On Pogorelov estimates for Monge-Amp\`ere type equations},
Discrete Contin. Dyn. Syst. \textbf{28} (2010), 1121--1135. 

\bibitem{Mo25}
C. Mooney,
\emph{Remarks on the quadratic Hessian equation},
arXiv:2505.14586, 2025. 

\bibitem{Po78}
A. V. Pogorelov,
\emph{The Minkowski multidimensional problem},
V. H. Winston \& Sons, Washington, D.C.; Halsted Press, New York, 1978. 

\bibitem{Qiu24}
G. Qiu,
\emph{Interior Hessian estimates for $\sigma_2$ equations in dimension three},
Front. Math. \textbf{19} (2024), 577--598.

\bibitem{SY25}
R. Shankar and Y. Yuan,
\emph{Hessian estimates for the $\sigma_2$ equation in dimension four},
Ann. of Math. \textbf{201} (2025), 489--513.

\bibitem{Tr97}
N.~S. Trudinger,
\emph{Weak solutions of Hessian equations},
Comm. Partial Differential Equations \textbf{22} (1997), 1251--1261. 

\bibitem{TW99}
N.~S. Trudinger and X.-J. Wang,
\emph{Hessian measures. II},
Ann. of Math. \textbf{150} (1999), 579--604. 
 

\bibitem{Wa09}
X.-J. Wang,
\emph{The $k$-Hessian equation},
in \emph{Geometric analysis and PDEs}, Lecture Notes in Math., vol.~1977, Springer, 2009, pp.~177--252.

\bibitem{WY09}
M. Warren and Y. Yuan,
\emph{Hessian estimates for the $\sigma_2$ equation in dimension $3$},
Comm. Pure Appl. Math. \textbf{62} (2009), 305--321.

\bibitem{Yu23}
Y. Yuan,
\emph{A monotonicity approach to Pogorelov's Hessian estimates for Monge-Amp\`ere equation},
Math. Eng. \textbf{5} (2023), No. 037, 6pp. 

\end{thebibliography}
\end{document}